\newtheorem{thm}{Theorem}[section]
\newtheorem{cor}{Corollary}[section]
\newtheorem{eg}{Example}[section]
\newtheorem{definition}{Definition}[section]
\newtheorem{rem}{Remark}[section]
\newtheorem{prob}{Problem}[section]
\crefname{thm}{Theorem}{Theorem}
\crefname{lem}{Lemma}{Lemma}
\crefname{prop}{Proposition}{Proposition}
\crefname{cor}{Corollary}{Corollary}
\crefname{eg}{Example}{Example}
\crefname{definition}{Definition}{Definition}
\crefname{q}{Question}{Question}
\crefname{rem}{Remark}{Remark}
\crefname{conj}{Conjecture}{Conjecture}
\crefname{claim}{Claim}{Claim}
\crefname{prob}{Problem}{Problem}
\def\fr#1{\mathfrak{#1}}
\def\bb#1{\mathbb{#1}}
\def\ali#1{\begin{align}#1\end{align}}
\def\w{\wedge}
\def\bw{\bigwedge}
\def\o{\oplus}
\def\bo{\bigoplus}
\def\ang#1{\langle #1\rangle}
\def\dim#1{{\rm dim\ }#1}
\def\0{\{ 0\}}
\def\Ker#1{{\rm Ker}\ #1}
\def\Im#1{{\rm Im}\ #1}
\def\bs{\backslash}
\title[Quasi-projective nilmanifolds]{Quasi-projective nilmanifolds}
\author{Taito Shimoji}
\date{}
\address{Department of Mathematics, Graduate School of Science, The University of Osaka, Osaka, Japan}
\email{u215629i@ecs.osaka-u.ac.jp}
\begin{document}
\maketitle
\begin{abstract}
Let $M=V\setminus D$ be a smooth quasi-projective variety for some smooth projective variety $V$ and a divisor $D$ with normal crossings. Assume that $M$ is diffeomorphic to a non-compact nilmanifold $\Gamma\backslash N\times\mathbb{R}^m$. We show that $M$ is diffeomorphic to a trivial bundle $T^n\times \mathbb{R}^m$ over a torus $T^n$ if the first cohomology $H^1(V)$ of $V$ vanishes. Moreover, in general, we show that $M$ is diffeomorphic to a trivial bundle $T^{b_1(M)}\times \mathbb{R}^m$ over a $b_1(M)$-dimensional torus $T^{b_1(M)}$, or a trivial bundle $E\times \mathbb{R}^m$ such that $E$ is a torus bundle $E\rightarrow T^{b_1(M)}$ over a torus $T^{b_1(M)}$. Conversely, we consider whether non-compact nilmanifolds are diffeomorphic to a smooth quasi-projective variety. We determine the Lie groups of dimension up to $8$ such that  corresponding non-compact nilmanifolds may be diffeomorphic to smooth quasi-projective varieties.
\end{abstract}
\section{Introduction}
Let $M=V\setminus D$ be a smooth quasi-projective variety for some smooth projective variety $V$ (which is called a compactification of $M$) and a divisor $D$ with normal crossings. In \cite{Del}, Deligne showed that the complexification $H^*(M)_{\bb{C}}$ of the real cohomology $H^*(M)$ admits the bigrading of a mixed Hodge structure $(H^*(M),F,W)$
\ali{
H^k(M)_{\bb{C}}=\bo_{(p,q)\in I_k}H_{p,q}^k(M)
}
such that
\ali{
\overline{H_{p,q}^k(M)}\equiv H_{q,p}^k(M)\mod\bo_{s+t<p+q}H_{s,t}^k(M) 
}
for all $k\geq 0$ where
\ali{
I_k:=\{ (p,q)\in \bb{Z}\times \bb{Z}\mid k\leq p+q\leq 2k\}.
}
For example, $H^1(M)_{\bb{C}}=H_{1,0}^1(M)\o H_{0,1}^1(M)\o H_{1,1}^1(M)$ (see \cref{MHS} and \cref{Del}).

Let $N$ be a simply-connected nilpotent Lie group and $\Gamma$ a discrete subgroup of $N$. Assume that the quotient space $M=\Gamma\bs N$ is compact. It is well-known that  $M$ is smooth quasi-projective variety if and only if $N$ is abelian Lie group, and $M$ is diffeomorphic to a torus (see \cite{BensonGordon},\cite{Hasegawa} and \cite{Hasegawa2}). 
We consider the case that the quotient space $M=\Gamma\bs N$ is not compact. In this case, we call $M$ non-compact nilmanifold. It is known that every non-compact nilmanifold $M$ can be written as the product $M=\tilde{\Gamma}\bs \tilde{N}\times \bb{R}^m$ for some (compact) nilmanifold $\tilde{\Gamma}\bs \tilde{N}$ and integer $m$ (see \cite{Mal}). We consider the following problem.
\begin{prob}\label{prob}
When is a smooth quasi-projective variety diffeomorphic to a non-compact nilmanifold?
\end{prob}
For example, let us consider $M=\Gamma\bs H_3(\bb{R})\times \bb{R}$ where $H_3(\bb{R})$ is the $3$-dimensional Heisenberg group and $\Gamma$ is a lattice in $H_3(\bb{R})$. We see that $M$ is a principal $\bb{C}^*$-bundle over a torus $T^2$. Defining a complex structure on $T^2$, $M$ is a smooth quasi-projective variety (see \cref{eg} and \cite{Morgan}).
 
We first obtain the following result by using the condition of mixed Hodge structures on the cohomologies of smooth quasi-projective varieties. We define the first Betti number $b_1(M)$ of a smooth quasi-projective variety $M$ by
\ali{
b_1(M):=\dim H^1(M)
}
where $H^1(M)$ is the real cohomology of $M$. Note that $b_1(M)=b_1(\Gamma\bs N)$.
\begin{thm}[\cref{abel}]
Let $M=V\setminus D$ be a smooth quasi-projective variety. Assume that $M$ is diffeomorphic to a non-compact nilmanifold $\Gamma\bs N\times \bb{R}^m$. If $H^1(V)=\0$ for the compactification $V$ of $M$, then $M$ is diffeomorphic to a trivial bundle $T^n\times \bb{R}^m$ over a torus $T^n$ where $n=\dim N$. In particular, $M$ is diffeomorphic to a trivial bundle $T^{b_1(M)}\times \bb{R}^m$ over a $b_1(M)$-dimensional torus.
\end{thm}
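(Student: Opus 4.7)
The plan is to apply Morgan's theorem \cite{Morgan} on the mixed Hodge structure of the minimal Sullivan model of a smooth quasi-projective variety and then force the Chevalley--Eilenberg differential on $\fr n^*$ to vanish by a short weight count. Since $\bb R^m$ is contractible, $M$ is homotopy equivalent to the compact nilmanifold $\Gamma\bs N$, and by Nomizu's theorem the Chevalley--Eilenberg complex $(\bw\fr n^*, d_{CE})$ with $d_{CE}\xi(X,Y)=-\xi([X,Y])$ is a minimal Sullivan model of $M$. All generators sit in degree $1$, so $V^1 = \fr n^*$; Morgan's theorem then equips this model with a mixed Hodge structure compatible with Deligne's MHS on $H^*(M)$, and pins down the weights on $V^1$ to $\{1,2\}$.

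Next I would observe that any weight-$1$ element $\xi\in V^1$ is automatically closed: $d_{CE}\xi\in W_1(\bw^2 V^1)=W_0 V^1\w W_1 V^1=0$. Since $V^0=\bb R$ carries the trivial differential, the identification $\ker d_{CE}|_{V^1}\cong H^1(M)$ is an isomorphism of MHS, and hence the weight-$1$ part of $V^1$ lies inside $W_1 H^1(M)$. The Gysin-type long exact sequence of MHS reads
\ali{
0\to H^1(V)\to H^1(M)\to H^0(\tilde D)(-1),
}
so the hypothesis $H^1(V)=\0$ makes $H^1(M)$ pure of weight $2$. Combining, the weight-$1$ piece of $V^1$ vanishes and $V^1=\fr n^*$ is itself pure of weight $2$.

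The conclusion is now a one-line weight count: the exterior square $\bw^2 V^1$ is pure of weight $4$, so the MHS morphism $d_{CE}\colon V^1\to\bw^2 V^1$ is forced to vanish. This is equivalent to $[\fr n,\fr n]=0$, so $\fr n$ is abelian, $N\cong\bb R^n$ with $n=\dim N$, and $\Gamma\bs N\cong T^n$. Hence $M\cong T^n\times\bb R^m$ is a trivial bundle, and $n=b_1(M)$ since $H^1(T^n\times\bb R^m)=\bb R^n$.

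The main obstacle is the invocation of Morgan's theorem: I need both the weight bound $\{1,2\}$ on degree-$1$ generators and the compatibility of the MHS on $V^1$ with Deligne's MHS on $H^1(M)$, so that the vanishing of $W_1 H^1(M)$ really kills the weight-$1$ piece of $V^1$. Everything else is a formal weight count together with standard facts about smooth quasi-projective varieties (the MHS exactness for the pair $(V,D)$) and nilmanifolds (Nomizu's identification of the Chevalley--Eilenberg complex with the minimal model).
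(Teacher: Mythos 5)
Your overall strategy (Nomizu to identify $(\bw\fr n^*,d)$ as the minimal model, Morgan to put a mixed Hodge structure on it, the Gysin sequence to make $H^1(M)$ pure of weight $2$, and then a weight count to kill $d_{CE}$) follows the same skeleton as the paper's proof, and both the Gysin-sequence step and the final observation that a weight-$2$ space cannot map nontrivially to a pure weight-$4$ space are correct. However, there is a genuine gap at the assertion that Morgan's theorem ``pins down the weights on $V^1=\fr n^*$ to $\{1,2\}$.'' Morgan's theorem gives no such bound: the bigrading of \cref{exNomizu} only says $\fr n_{\bb C}=\bo_{p,q\le 0,\,p+q\le -1}\fr n_{p,q}$, i.e.\ the dual weights on the degree-one generators are merely $\ge 1$. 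Generators dual to the $k$-th stage of the lower central series can have weight as large as $2k$; the paper's own example in \cref{Prework} has a degree-one generator $C_1$ of type $(-2,-1)$, hence dual weight $3$. Consequently, killing the weight-$1$ piece of $V^1$ does not make $V^1$ pure of weight $2$: under $H^1(V)=\0$ you only get a diagonal decomposition $\fr n_{\bb C}=\fr n_{-1,-1}\o\fr n_{-i_1,-i_1}\o\cdots\o\fr n_{-i_k,-i_k}$ with $i_j\ge 2$ possible, and a nontrivial bracket $\fr n_{-1,-1}\w\fr n_{-1,-1}\to\fr n_{-2,-2}$ is perfectly weight-compatible, so your weight count alone cannot exclude a non-abelian $\fr n$.

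Ruling out those higher diagonal pieces is exactly where the paper does its real work, and this step is missing from your argument. The paper takes a basis adapted to the decomposition above, uses that a nilpotent Lie algebra is unimodular so the wedge of a full dual basis represents a nonzero class in $H^{n}(\fr n_{\bb C})$ with $n=\dim\fr n_{\bb C}$, observes that this class has type $\bigl(\sum_j i_jm_j,\sum_j i_jm_j\bigr)$ while Deligne's constraint forces $H^{n}(\fr n_{\bb C})=H^{n}_{n,n}(\fr n_{\bb C})$, and concludes $\sum_j i_jm_j=n=\sum_j m_j$, hence $i_j=1$ for all $j$, so $\fr n_{\bb C}=\fr n_{-1,-1}$ and the bracket lands in $\fr n_{-2,-2}=\0$. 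You need to add this (or an equivalent top-degree/unimodularity) argument; the steps before and after it in your write-up are sound.
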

In general, we show the following result. The proof is the same argument as in the above result.   
\begin{thm}[\cref{abel2step}]\label{main1}
Let $M$ be a smooth quasi-projective variety. Assume that $M$ is diffeomorphic to a non-compact nilmanifold $\Gamma\bs N\times \bb{R}^m$. Then $M$ is diffeomorphic to a trivial bundle $T^{b_1(M)}\times\bb{R}^m$ over a $b_1(M)$-dimensional torus $T^{b_1(M)}$ or a trivial bundle $E\times \bb{R}^m$ such that $E$ is a torus bundle $E\rightarrow T^{b_1(M)}$ over a torus $T^{b_1(M)}$. In particular, the fundamental group $\pi_1(M,x)$ is torsion-free nilpotent and the nilpotency class is at most two. 
\end{thm}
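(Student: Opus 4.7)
The approach is parallel to \cref{abel}: upgrade the MHS on $H^1(M)$, which has weights in $\{1,2\}$, to a weight bound on the full nilpotent Lie algebra $\fr{n}=\mathrm{Lie}(N)$, then read off the geometry. I first note that since $\bb{R}^m$ is contractible and $\Gamma\bs N$ is a $K(\Gamma,1)$, Nomizu's theorem gives $H^*(M)\cong H^*(\fr{n})$, and the Chevalley--Eilenberg algebra $(\bw^\bullet \fr{n}^*, d_{CE})$ is itself a minimal Sullivan model of $M$, with all generators concentrated in degree $1$.

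Next I would apply Morgan's theorem \cite{Morgan}: the minimal model of a smooth quasi-projective variety carries a MHS extending Deligne's, and the weights on degree-$1$ generators lie in $\{1,2\}$. Under the standard correspondence between the weight filtration on the CE model and the lower central series of $\fr{n}$---where the weight-$k$ piece of $\fr{n}^*$ is dual to $\fr{n}^{(k)}/\fr{n}^{(k+1)}$ with $\fr{n}^{(1)}=\fr{n}$, $\fr{n}^{(i+1)}=[\fr{n},\fr{n}^{(i)}]$---the vanishing of weights $\geq 3$ forces $\fr{n}^{(3)}=0$. Thus $\fr{n}$ is at most $2$-step nilpotent.

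With $\fr{n}$ at most $2$-step nilpotent and $\fr{n}/[\fr{n},\fr{n}]\cong\bb{R}^{b_1(M)}$, the Lie group $N$ fits into a central extension $1\to\bb{R}^r\to N\to \bb{R}^{b_1(M)}\to 1$ with $r=\dim[\fr{n},\fr{n}]$, and $\Gamma$ descends to present $\Gamma\bs N$ as a principal torus bundle $T^r\to\Gamma\bs N\to T^{b_1(M)}$. The case $r=0$ reproduces the first alternative $T^{b_1(M)}\times\bb{R}^m$, and $r>0$ the second. The ``in particular'' assertion is immediate: $\Gamma$ is a lattice in the simply-connected nilpotent group $N$, hence torsion-free of the same nilpotency class as $N$.

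The main obstacle I anticipate is calibrating Morgan's weight filtration on the generators $\fr{n}^*$ with the lower-central-series filtration on $\fr{n}$. Once this compatibility is in hand, the argument reduces to bookkeeping; it should proceed via the same mechanism used in \cref{abel}, now tracking both the weight-$1$ summand of $\fr{n}^*$ coming from $H^1(V)$ and the weight-$2$ summand arising from residues along $D$.
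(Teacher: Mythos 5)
Your reduction to the Lie algebra (Nomizu plus the contractibility of $\bb{R}^m$, making $(\bw^{\bullet}\fr{n}^*,d)$ a minimal model of $M$) agrees with the paper, and so does the endgame via Palais--Stewart once $\fr{n}$ is known to be at most $2$-step. The gap is in the middle: you claim that Morgan's mixed Hodge structure puts the degree-one generators $\fr{n}^*$ in weights $\{1,2\}$, and that the weight filtration matches the lower central series, so that the absence of weight $\geq 3$ forces $C^2\fr{n}=\0$. Neither claim is available. Deligne's weight bound $\{1,2\}$ applies to $H^1(M)\cong(\fr{n}/C^1\fr{n})^*$, not to the non-closed generators dual to $C^1\fr{n}$; Morgan's theorem only constrains the bigrading on $\fr{n}_{\bb{C}}$ to satisfy $p,q\le 0$, $p+q\le -1$ and to be compatible with Deligne's structure on cohomology, which permits generators of weight $3$ and higher, and the weight filtration is in general strictly coarser than the lower central series. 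The paper's \cref{Prework} exhibits exactly such a case: the $3$-step Lie algebra $\fr{g}$ there carries a bigrading with a component $\ang{C_1}_{-2,-1}$ of weight $3$ satisfying all of Morgan's conditions. Your argument, applied verbatim, would conclude that $\fr{g}$ is at most $2$-step, which is false; so ``vanishing of weights $\geq 3$ on generators'' is precisely what cannot be extracted from Morgan alone.

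What the paper uses instead, and what your plan is missing, is the top-degree cohomology of the nilmanifold. Since $\fr{n}$ is nilpotent, the volume form $x_1\wedge\cdots\wedge x_n$ (with $n=\dim\fr{n}$) represents a nonzero class in $H^{n}(\fr{n}_{\bb{C}})\cong H^{n}(M)$, and its bidegree is $(s,t)$ with $s=\sum_{\fr{n}_{p,q}\neq\0}(-p)\dim\fr{n}_{p,q}$ and $t=\sum_{\fr{n}_{p,q}\neq\0}(-q)\dim\fr{n}_{p,q}$. Deligne's theorem (\cref{Del}) applied in degree $n$ forces $s\le n$ and $t\le n$, i.e. $\sum_{\fr{n}_{p,q}\neq\0}(-p-1)\dim\fr{n}_{p,q}\le 0$ and $\sum_{\fr{n}_{p,q}\neq\0}(-q-1)\dim\fr{n}_{p,q}\le 0$, from which the paper concludes that $\fr{n}_{p,q}=\0$ unless $(p,q)\in\{(-1,0),(0,-1),(-1,-1)\}$, hence $2$-step nilpotency. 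Note that this step uses that $M$ itself is diffeomorphic to a non-compact nilmanifold, so that $H^{n}(M)\neq\0$ --- not merely that $\pi_1(M)$ is a lattice in $N$; this is exactly the extra leverage \cref{abel2step} has over the rank-$\le 7$ results recalled in \cref{Prework}. If you rework your proof, replace the weight bound on generators by this top-class computation.
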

The proof of the above results follows from the following corollary. The following corollary can be shown by using the results of Nomizu \cite{Nomizu}, Deligne \cite{Del} and Morgan \cite{Morgan}.
\begin{cor}[\cref{exNomizu}]
Let $M$ be a smooth quasi-projective variety. Assume that $M$ is diffeomorphic to a non-compact nilmanifold $\Gamma\bs N\times \bb{R}^m$. Let $\fr{n}$ be the Lie algebra of $N$. Then $\fr{n}_{\bb{C}}$ admits the bigrading $\fr{n}_{\bb{C}}=\bo_{p,q\leq 0,p+q\leq -1}\fr{n}_{p,q}$ of a mixed Hodge structure such that for the induced bigrading $H^*(\fr{n}_{\bb{C}})=\bo_{p,q}H_{p,q}^*(\fr{n}_{\bb{C}})$, the following conditions hold:
\ali{
&\bullet\ H^j(\fr{n}_{\bb{C}})=\bo_{(p,q)\in I_j}H_{p,q}^j(\fr{n}_{\bb{C}})\\
&\bullet\ \overline{H_{p,q}^j(\fr{n}_{\bb{C}})}\equiv H_{q,p}^j(\fr{n}_{\bb{C}}) \mod \bo_{s+t<p+q}H_{s,t}^j(\fr{n}_{\bb{C}})
}
for all $0\leq j\leq \dim\fr{n}_{\bb{C}}$ where $I_j$ is given by
\ali{
I_j:=\{ (p,q)\in \bb{Z}^2\mid j\leq p+q\leq 2j,0\leq p,q\leq j\}.
}
\end{cor}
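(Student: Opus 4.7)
The plan is to push Deligne's mixed Hodge structure on $H^*(M)$ down to the level of the Lie algebra $\fr n$ by combining two classical ingredients: Nomizu's identification of the cohomology of a compact nilmanifold with the Chevalley--Eilenberg cohomology of its Lie algebra, and Morgan's mixed Hodge structure on the minimal Sullivan model of a smooth quasi-projective variety.

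Since $M\simeq \Gamma\bs N\times\bb R^m$ is homotopy equivalent to the compact nilmanifold $\Gamma\bs N$, Nomizu's theorem identifies $H^*(M;\bb C)$ with $H^*(\bw^*\fr n_{\bb C}^*,d)$, where $d$ is dual to the Lie bracket on $\fr n$. The DGA $\bw^*\fr n_{\bb C}^*$ is generated in degree one with differential strictly increasing wedge length, hence it is a minimal Sullivan algebra; together with Nomizu's quasi-isomorphism to the de Rham complex, this realizes $\bw^*\fr n_{\bb C}^*$ as the (full) minimal model of $M$, since $M$ is aspherical with torsion-free nilpotent fundamental group so that the $1$-minimal model coincides with the full minimal model.

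Next I would invoke Morgan's theorem, which endows the minimal model of a smooth quasi-projective variety with a mixed Hodge structure inducing Deligne's MHS on cohomology. Applied to our $M$, this equips the generating space $\fr n_{\bb C}^*$ with a bigrading of MHS compatible with the Chevalley--Eilenberg differential $d:\fr n_{\bb C}^*\to\bw^2\fr n_{\bb C}^*$. Since $\fr n_{\bb C}^*$ lies in degree one, Morgan's weight bounds ($k\leq w\leq 2k$ in degree $k$) together with his Hodge-type bounds ($0\leq p,q\leq k$) force the appearing bidegrees on $\fr n_{\bb C}^*$ to lie in $\{(1,0),(0,1),(1,1)\}$. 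Dualizing $\fr n_{\bb C}=(\fr n_{\bb C}^*)^*$ flips every bidegree and yields a decomposition $\fr n_{\bb C}=\bo_{p,q\le 0,\,p+q\le -1}\fr n_{p,q}$; dual compatibility with $d$ translates exactly into compatibility of the bigrading on $\fr n_{\bb C}$ with the Lie bracket.

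Finally, extending the bigrading multiplicatively to $\bw^*\fr n_{\bb C}^*$ produces a bigrading that commutes with $d$ and hence descends to $H^*(\fr n_{\bb C})$. Under Nomizu's isomorphism and the Morgan--Deligne compatibility, this descended bigrading coincides with Deligne's bigrading on $H^j(M;\bb C)$, and the two bulleted conditions on $H^j(\fr n_{\bb C})$ follow immediately from the corresponding properties of Deligne's MHS recorded in the introduction. The step I expect to be the main obstacle is the careful invocation of Morgan's theorem: one must verify that in this non-compact nilmanifold setting $\bw^*\fr n_{\bb C}^*$ really is the recipient of Morgan's MHS (through the chain of quasi-isomorphisms from Nomizu and the $\bb R^m$-factor), and that Morgan's general bidegree bounds specialize to precisely the three Hodge types $(1,0),(0,1),(1,1)$ on the degree-one generating space that are needed for the claimed decomposition after dualization. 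Once these compatibilities are pinned down, the remainder of the argument is a formal unwinding of definitions.
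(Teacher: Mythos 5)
Your overall route --- Nomizu's theorem to identify $(\bw^{\bullet}\fr{n}_{\bb{C}}^*,d)$ as the minimal model of $M$ (the $\bb{R}^m$ factor being absorbed by a homotopy equivalence), Morgan's \cref{Morgan} to put a mixed Hodge bigrading on that minimal model, and Deligne's \cref{Del} to control the induced bigrading on cohomology --- is exactly the combination the paper invokes for \cref{exNomizu}, and most of your unwinding is correct. However, one step as you state it is wrong: Morgan's bounds do \emph{not} force the bidegrees on the degree-one generating space $\fr{n}_{\bb{C}}^*$ to lie in $\{(1,0),(0,1),(1,1)\}$. The constraints $j\leq p+q\leq 2j$ and $0\leq p,q\leq j$ are Deligne's constraints on the \emph{cohomology} $H^j(M)$; the degree-one part of the minimal model contains $H^1$ as its closed subspace but also contains the non-closed generators, and those may carry weight strictly greater than $2$. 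This is precisely why the corollary asserts only $\fr{n}_{\bb{C}}=\bo_{p,q\leq 0,\,p+q\leq -1}\fr{n}_{p,q}$ rather than a three-type decomposition. The paper's own example in \cref{Prework} --- the $3$-step algebra $\fr{g}$ whose bigrading has components of bidegree $(-2,-1)$ and $(-1,-2)$ while satisfying all of Morgan's conditions --- shows that your claim is false as a consequence of Morgan's theorem alone. The fact that only the types $(-1,0),(0,-1),(-1,-1)$ actually occur for a quasi-projective nilmanifold is the content of \cref{abel2step}, and it requires the separate fundamental-class argument there (a nonvanishing top-degree class forces $\sum(-p-1)\dim{\fr{n}_{p,q}}\leq 0$ and $\sum(-q-1)\dim{\fr{n}_{p,q}}\leq 0$); it is not a formal consequence of the bidegree bounds. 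If you replace the over-strong claim with the correct one --- Morgan's mixed Hodge structure on the ($1$-)minimal model gives $p,q\geq 0$ and $p+q\geq 1$ on $\fr{n}_{\bb{C}}^*$, hence $p,q\leq 0$ and $p+q\leq -1$ after dualizing --- the remainder of your argument goes through and establishes the corollary as stated.
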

Conversely, we consider whether a non-compact nilmanifold $\Gamma\bs N\times\bb{R}^m$ is diffeomorphic to a smooth quasi-projective variety. By the proof of \cref{main1} (\cref{abel}), we give a sufficient condition for non-compact nilmanifolds not to be diffeomorphic to smooth quasi-projective varieties. 
\begin{cor}[\cref{3stepNG}]\label{3stepNGintro}
Let $N$ be a simply-connected nilpotent Lie group. Assume that the Lie algebra $\fr{n}=\fr{n}_{\bb{Q}}\otimes\bb{R}$ of $N$ is $(3\leq )t$-step nilpotent or, $2$-step and not isomorphic to $\tilde{\fr{n}}\o \bb{R}^n$ for some $2$-step nilpotent Lie algebra $\tilde{\fr{n}}$ and abelian Lie algebra $\bb{R}^n$ such that 
\ali{
\dim H^1(\fr{n})\notin 2\bb{Z}.
}
Then $M=\Gamma\bs N\times \bb{R}^m$ is not a smooth quasi-projective variety for any lattice $\Gamma$ in $N$ and integer $m>0$.
\end{cor}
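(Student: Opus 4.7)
My plan is to prove the contrapositive. Suppose $M=\Gamma\bs N\times \bb{R}^m$ is a smooth quasi-projective variety; I will show $\fr n$ cannot satisfy either of the two stated hypotheses.

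First, if $\fr n$ is $t$-step nilpotent with $t\ge 3$, then since $\bb{R}^m$ is contractible, $\pi_1(M)=\Gamma$, and as a lattice in $N$ the group $\Gamma$ inherits the nilpotency class $t$ of $\fr n$. But \cref{main1} asserts that the fundamental group of a smooth quasi-projective nilmanifold has nilpotency class at most two, a contradiction.

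Second, suppose $\fr n$ is $2$-step, not isomorphic to $\tilde{\fr n}\o \bb{R}^n$, and $\dim H^1(\fr n)$ is odd. By \cref{exNomizu}, $\fr n_{\bb C}$ carries a MHS bigrading $\bo \fr n_{p,q}$ with $p,q\le 0$, $p+q\le -1$, and weight filtration $W_\bullet$ satisfying $[W_i,W_j]\subseteq W_{i+j}$. Non-decomposability of $\fr n$ is equivalent to $Z(\fr n)=[\fr n,\fr n]$: given any central complement $V$ to $[\fr n,\fr n]$ in $Z(\fr n)$, a linear complement $W$ of $V$ containing $[\fr n,\fr n]$ is automatically a Lie subalgebra, yielding a decomposition $\fr n=W\o V$. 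Moreover, since $H^1(M)$ has MHS weights only in $\{1,2\}$ and $H^1(\fr n)=(\fr n/[\fr n,\fr n])^*$, the quotient $\fr n/[\fr n,\fr n]$ carries weights only in $\{-1,-2\}$, so $W_{-3}(\fr n)\subseteq [\fr n,\fr n]$. Since $\fr n$ is $2$-step, $[\fr n,\fr n]\subseteq Z(\fr n)$, hence $W_{-3}\subseteq Z(\fr n)$ and $[W_{-2},\fr n]\subseteq W_{-3}\subseteq [\fr n,\fr n]$.

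The next step is to refine this to $W_{-2}(\fr n)=[\fr n,\fr n]$. Once this is achieved, $\mathrm{gr}^W_{-1}(\fr n)=\fr n/[\fr n,\fr n]$ is pure of weight $-1$, so its bigrading is concentrated in types $(-1,0)$ and $(0,-1)$. The conjugation condition $\overline{\fr n_{p,q}}\equiv \fr n_{q,p}$ modulo lower weights becomes an exact identity on this top graded piece, giving $\dim \fr n_{-1,0}=\dim \fr n_{0,-1}$. Hence $\dim H^1(\fr n)=\dim \fr n/[\fr n,\fr n]=2\dim \fr n_{-1,0}$ is even, contradicting the odd hypothesis.

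The main obstacle will be the refinement $W_{-2}=[\fr n,\fr n]$. Elements of $W_{-2}\setminus [\fr n,\fr n]$ can a priori act non-trivially via the bracket, landing in $W_{-3}\subseteq [\fr n,\fr n]$. Ruling them out requires combining the bigrading-compatibility of the bracket with the non-decomposability $Z(\fr n)=[\fr n,\fr n]$: a bigrading-compatible complement of $[\fr n,\fr n]$ in $W_{-2}$ would lie in the center and thus yield a non-trivial Lie algebra direct summand of $\fr n$, contradicting non-decomposability. Making this precise, and handling potential mixing with lower-weight pieces through the $\equiv$ condition, is where most of the work lies.
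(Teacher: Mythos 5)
Your first case ($t\ge 3$) is fine and is the same reduction the paper makes via \cref{abel2step}. In the second case your skeleton --- indecomposability forces $W_{-2}(\fr{n}_{\bb{C}})=[\fr{n}_{\bb{C}},\fr{n}_{\bb{C}}]$, hence $H^1$ is pure of weight one and the conjugation symmetry pairs $H^1_{1,0}$ with $H^1_{0,1}$, making $\dim H^1(\fr{n})$ even --- aims at the right conclusion, but the pivotal step $W_{-2}=[\fr{n},\fr{n}]$ is left as an acknowledged ``obstacle,'' and the mechanism you sketch for it does not close. You argue that a bigrading-compatible complement of $[\fr{n},\fr{n}]$ in $W_{-2}$ ``would lie in the center''; but the only bracket estimate you have established is $[W_{-2},\fr{n}]\subseteq W_{-3}\subseteq[\fr{n},\fr{n}]$, which says such elements bracket \emph{into} the derived algebra, not that they bracket to zero. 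A weight configuration with $\fr{n}_{-2,-1}\o\fr{n}_{-1,-2}\neq\0$ and $[\fr{n}_{-1,-1},\fr{n}_{-1,0}]\subseteq\fr{n}_{-2,-1}$ nonzero is not excluded by anything you have written; in that situation your complement is not central, no abelian direct summand is produced, and $\dim H^1$ can be odd (the weight-$(1,1)$ part of $H^1$ is self-conjugate and carries no parity constraint).

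What is missing is the structural input that the paper extracts in the proof of \cref{abel2step} and records in the paragraph immediately preceding \cref{3stepNG}: for a nilpotent $\fr{n}$ admitting a bigrading as in \cref{exNomizu}, the nonvanishing top-degree class, which must lie in $H^{n}_{s,t}$ with $s,t\le n$ by \cref{Del}, forces the concentration $\fr{n}_{\bb{C}}=\fr{n}_{-1,0}\o\fr{n}_{0,-1}\o\fr{n}_{-1,-1}$. Once this is known, $W_{-3}=\0$, so $W_{-2}=\fr{n}_{-1,-1}$ is central for pure weight reasons, and your indecomposability observation $Z(\fr{n})=[\fr{n},\fr{n}]$ immediately yields $\fr{n}_{-1,-1}=C^1\fr{n}_{\bb{C}}$, hence $H^1(\fr{n}_{\bb{C}})=H^1_{1,0}\o H^1_{0,1}$ with $\overline{H^1_{1,0}}=H^1_{0,1}$ and the desired evenness. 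So the corollary should be routed through the concentration statement established inside \cref{abel2step} rather than re-derived from the weight bound on $H^1$ alone; as written, your proposal has a genuine gap at its central step.
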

Moreover, in \cref{upto7} and \cref{dim8}, we determine the nilpotent Lie groups such that corresponding non-compact nilmanifolds may be diffeomorphic to smooth quasi-projective varieties. We use \cref{3stepNGintro} and the classification of $2$-step nilpotent Lie algebras of dimension up to $8$ in \cite{Nil} and \cite{YanDeng}( or \cite{RenZhu}).
\subsection{Comparing with our previous work}(\cref{Prework})
In \cite{Shimoji}, we consider whether it can be shown that the torsion-free nilpotent fundamental groups of smooth quasi-projective varieties are abelian or $2$-step by using the Morgan's result in \cite[Theorem (9.4)]{Morgan}. We showed that the nilpotency classes of the fundamental groups of smooth quasi-projective varieties of rank up to $7$ are at most $2$ (see \cite[Theorem 1.2]{Shimoji}). Moreover, we determined the Lie groups of dimension up to $7$ whose lattices may be isomorphic to the fundamental groups of smooth quasi-projective varieties (see \cite[Theorem 1.3 and 1.4]{Shimoji}). However, from rank $8$, we cannot determine the nilpotency classes of the fundamental groups of smooth quasi-projective varieties. In this paper, we focus on the class of the intersection between smooth quasi-projective varieties and non-compact nilmanifolds. We can show the restriction of the nilpotency classes of the fundamental groups in arbitrary dimension in this setting. 
\subsection{Acknowledgements}
The author thanks his supervisor, Professor Hisashi Kasuya at Nagoya University, for suggesting that the author consider \cref{prob} and  This work was supported by JST SPRING, Grant Number JPMJSP2138.
\section{Cohomologies of nilmanifolds}
In this section, we first recall the definitio of Differential Graded Algebras (DGA) and Minimal models. A graded algebra $A=\bo_{i\geq 0}A^i$ over a field $\bb{K}$ of characteristic $0$ is called {\em graded-commutative} if 
\ali{
xy=(-1)^{ij}yx\text{ for all }i\in A^i,y\in A^j.
}
A {\em differential} on $A$ is a linear map $d;A\rightarrow A$ such that $d\circ d=0$ and $d(A^i)\subset A^{i+1}$ for all $i\geq 0$. A {\em $\bb{K}$-DGA} $(A,d)$ is a pair of a graded-commutative graded algebra $A$ over a field $\bb{K}$ of characteristic $0$ and a differntial $d$ on $A$ such that the following conditions hold:
\begin{itemize}
\item The $\bb{K}$-algebra structure of $A$ is given by $\bb{R}\hookrightarrow A^0$.
\item The Leibniz rule holds: $d(ab)=d(a)b+(-1)^iad(b)$ for all $a\in A^i,b\in A^j$.
\end{itemize} 
The {\em $i$-th cohomology} of a DGA $(A,d)$ is given by
\ali{
H^i(A,d):=\frac{\Ker (d:A^i\rightarrow A^{i+1})}{\Im (d:A^{i-1}\rightarrow A^i)}.
}
Let $(A,d_A)$ and $(B,d_B)$ be $\bb{K}$-DGAs. A $\bb{K}$-linear map $f:A\rightarrow B$ is called a {\em morphism} of $\bb{K}$-DGA if $f(A^i)\subset B^i$ for all $i\geq 0$ and $d_B\circ f=f\circ d_A$. A morphism of $\bb{K}$-DGA $f:A\rightarrow B$ is called {\em quasi-ismorphism} if the induced map $f^*:H^*(A,d_a)\rightarrow H^*(B,d_B)$ is isomorphism. 
\begin{definition}[\cite{Boc16}]
Let $V=\bo_{i\geq 1}V^i$ be a graded vector space such that $V$ is spanned by a set $\{ a_k\mid k\in I\}$ for some well-ordered set $I$. Assume that $k<l$ implies $|a_k|\leq |a_l|$ where $|a|:=i$ for $a\in V^i$. A $\bb{K}$-DGA $(\bw V,d)$ is a {\em minimal model} of a $\bb{K}$-DGA $(A,d_A)$ if the following conditions hold:
\begin{itemize}
\item $d(a_k)\in \bw\text{Span}\{ a_l\mid l<k\}$ for all $k\in I$.
\item There exists a quasi-isomorphism $f:\bw V\rightarrow A$.  
\end{itemize} 
\end{definition}
\subsection{Nomizu's theorem}
Let $N$ be a simply-connected nilpotent Lie group and $\Gamma$ a lattice (discrete and cocompact subgroup) in $N$. The homogeneous space $M=\Gamma\bs N$ is called nilmanifold. Nomizu showed the following result.
\begin{thm}[\cite{Nomizu}]\label{Nomizu}
Let $M=\Gamma\bs N$ be a nilmanifold. Let $\fr{n}$ be the Lie algebra of $N$. Then the natural inclusion $(\bw^{\bullet}\fr{n}^*,d)\hookrightarrow (A^*(M),d)$ induces an isomorphism
\ali{
H^*(\fr{n})\cong H_{{\rm dR}}^*(M)
}
where $A^*(M)$ consists of left-invariant forms. In particular, $(\bw^{\bullet}\fr{n}^*,d)$ is a minimal model of $(A^*(M),d)$ in the sense of Sullivan (see \cite{Sul}).
\end{thm}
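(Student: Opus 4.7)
My plan is to prove Nomizu's theorem by induction on $\dim N$, reducing a general nilmanifold to a principal circle bundle over a nilmanifold of one lower dimension via the upper central series.

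First I would observe that left-invariant forms on $N$ descend to $M=\Gamma\bs N$, and the resulting sub-DGA of $(A^*(M),d)$ is identified canonically with the Chevalley--Eilenberg complex $(\bw^{\bullet}\fr{n}^*,d)$; so the content of the theorem is that this inclusion is a quasi-isomorphism. The base case is $N$ abelian, where $M$ is a torus $T^n$ and both sides compute the exterior algebra $\bw\fr{n}^*$ with zero differential, which is a standard de Rham calculation (using, e.g., the Künneth theorem on $(S^1)^n$).

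For the inductive step, assume $\fr{n}$ is non-abelian. Then $Z(\fr{n})\neq\0$, and by Malcev's rational structure theorem applied to the lattice $\Gamma$, one can choose a one-dimensional rational subspace $\fr{z}\subset Z(\fr{n})$ so that the corresponding central one-parameter subgroup $Z\subset N$ satisfies $\Gamma_Z:=\Gamma\cap Z$ is a lattice in $Z$. Setting $N':=N/Z$ and $\Gamma':=\Gamma/\Gamma_Z$, one obtains a principal $S^1$-bundle $\pi:M\to M'$, where $M'=\Gamma'\bs N'$ is a nilmanifold of dimension $\dim N-1$, fitting over the short exact sequence of Lie algebras $0\to\fr{z}\to\fr{n}\to\fr{n}'\to 0$. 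The key point is now to compare the Serre spectral sequence of $\pi$ with the Hochschild--Serre spectral sequence of this Lie algebra extension. The inclusion of left-invariant forms is a morphism of filtered DGAs (filtering by the degree of the $\fr{z}^*$-component on the left, and by the fiber degree of $\pi$ on the right), hence induces a morphism of spectral sequences. Since $\pi$ is a principal $S^1$-bundle, the local system $\cal{H}^q(S^1)$ on $M'$ is trivial, so $E_2^{p,q}$ of the Serre side is $H^p(M')\otimes H^q(S^1)$, while the Hochschild--Serre side gives $E_2^{p,q}=H^p(\fr{n}')\otimes H^q(\fr{z})$. Applying the inductive hypothesis to $M'$ identifies these $E_2$ pages, so the comparison morphism is an isomorphism on $E_2$, hence on $E_{\infty}$, and the theorem follows.

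Finally, for the minimal model assertion, I would pick a basis $x_1,\ldots,x_n$ of $\fr{n}$ adapted to the ascending central series (a Malcev basis), so that the dual basis $x_1^*,\ldots,x_n^*$ of $\fr{n}^*$ has $dx_k^*\in\bw^2\text{Span}\ang{x_1^*,\ldots,x_{k-1}^*}$; this is exactly the minimality condition of Sullivan for a DGA freely generated in degree $1$.

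The main obstacle I anticipate is Step~3: carefully checking that the filtration by powers of the ideal generated by $\pi^*\Omega^1(M')$ on the de Rham side restricts correctly to the filtration on invariant forms coming from the ideal $(\fr{n}')^{\perp}\subset\fr{n}^*$, so that the inclusion is a filtered quasi-isomorphism and not merely an unfiltered map. The verification that the monodromy is trivial, though expected since $\Gamma$ is nilpotent and the fiber $S^1$ is central, also requires a small argument using that $\Gamma_Z$ is central in $\Gamma$. Once these compatibilities are in place, the induction runs cleanly.
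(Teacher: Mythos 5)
The paper offers no proof of this statement: it is quoted verbatim from Nomizu's 1954 paper and used as a black box, so there is nothing in-paper to compare your argument against. What you propose is essentially Nomizu's original argument (the version reproduced in \cite{Raghu}): induct on $\dim N$, use Malcev's rational structure to find a rational central line $\fr{z}\subset Z(\fr{n})$ so that $\Gamma_Z=\Gamma\cap Z$ is a lattice, realize $M$ as a principal $S^1$-bundle $\pi:M\rightarrow M'=\Gamma'\bs N'$, and compare the Leray--Serre spectral sequence of $\pi$ with the Hochschild--Serre spectral sequence of $0\rightarrow\fr{z}\rightarrow\fr{n}\rightarrow\fr{n}'\rightarrow 0$ via the filtration-preserving inclusion of invariant forms. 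The outline is correct, and the points you flag as requiring care are exactly the right ones: rationality of $\fr{z}$ (so that $\Gamma_Z$ is a lattice in $Z$ and $\Gamma/\Gamma_Z$ is a lattice in $N/Z$), triviality of the monodromy (automatic because the structure group $S^1$ is connected, so the local system $\cal{H}^q(S^1)$ is trivial), compatibility of the two filtrations, and convergence (automatic since both filtrations are finite). A standard shortcut for the inductive step, if you want to avoid the general spectral-sequence comparison theorem, is to note that both spectral sequences have only two nonzero rows, hence collapse to Gysin-type long exact sequences in which the connecting maps are cup product with the Euler class of $\pi$ on one side and with $[d z^*]\in H^2(\fr{n}')$ on the other; the inclusion intertwines these and the five lemma finishes.

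One small correction in the last step: with a basis adapted to the ascending central series listed center-first, bracketing lowers the level, so one gets $dx_k^*\in\bw^2\ang{x_l^*\mid l>k}$, which is the reverse of Sullivan's condition. You should instead order the dual basis so that a complement of $(C^1\fr{n})^{\perp}$ comes last --- equivalently, list first the duals of a complement of $C^1\fr{n}$ in $\fr{n}$ (which are closed), then the duals of a complement of $C^2\fr{n}$ in $C^1\fr{n}$, and so on --- to obtain $d(a_k)\in\bw\,\mathrm{Span}\{a_l\mid l<k\}$. This is cosmetic and does not affect the argument; nilpotency of $\fr{n}$ is precisely what makes such an ordering possible.
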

Thus, we can compute de Rham cohomologies of nilmanifolds by Lie algebra cohomologies. \begin{rem}[\cite{Mal},\cite{Raghu}]\label{lattice}
Let $N$ be a simply-connected nilpotent Lie group and $\fr{n}$ the Lie algebra of $N$. Assume $\dim N=n$. Then $N$ admits a lattice if and only if there exists a basis $X_1,\dots,X_n$ such that the structure constants $\{ C_{ij}^k\}$ given by
\ali{
[X_i,X_j]=\sum_{k=1}^nC_{ij}^kX_k
}
are all in $\bb{Q}$.
\end{rem}
\begin{eg}
Let $H_3(\bb{R})$ be the $3$-dimensional Heisenberg group. We see that there exist a basis of the Lie algebra $\fr{n}_3(\bb{R})=\ang{X_1,Y_1,Z}$ of $H_3(\bb{R})$ such that 
\ali{
[X_1,Y_1]=Z,[X_1,Z]=0=[Y_1,Z].
}
Since the structure constants are all in $\bb{Q}$, the Lie group $H_3(\bb{R})$ admits a lattice. For example, $H_3(\bb{Z})$ is a lattice in $H_3(\bb{R})$.
\end{eg}
Let $N$ be a simply-connected nilpotent Lie group and $\Gamma$ a lattice in $N$. The homogeneous space $M=\Gamma\bs N$ is called {\em nilmanifold}.
\begin{eg}
Let $M=H_3(\bb{Z})\bs H_3(\bb{R})$ be a nilmanifold where $H_3(\bb{R})$ is $3$-dimensional Heisenberg group. Let $X_1,Y_1$ and $Z$ be a basis of $H_3(\bb{R})$ as in \cref{lattice}. Then the cohomologies of $\fr{n}_3(\bb{R})$ are given by 
\ali{
H_{{\rm dR}}^0(M)&\cong H^0(\fr{n})=\bb{R},\\
H_{{\rm dR}}^1(M)&\cong H^1(\fr{n})=\ang{[x_1],[y_1]},\\
H_{{\rm dR}}^2(M)&\cong H^2(\fr{n})=\ang{[x_1\w z],[y_1\w z]},\\
H_{{\rm dR}}^3(M)&\cong H^3(\fr{n})=\ang{[x_1\w y_1\w z]},\\
H_{{\rm dR}}^j(M)&\cong H^j(\fr{n})=\0 \text{ for all }j>3\\
}
where $x_1,y_1$ and $z$ are the dual basis of $X_1,Y_1$ and $Z$.
\end{eg}
\section{Deligne's Mixed Hodge structures}
\begin{definition}[\cite{Del},\cite{Morgan},\cite{PeterSteen}]\label{MHS}
For an $\bb{R}$-vector space $V$, an $\bb{R}$-\textit{mixed Hodge structure} $(V,W,F)$ is defined as the following data:
\begin{itemize}
\item $W=\{W_k(V)\}_{k\in\bb{Z}}$ is a finite increasing filtration on $V$.
\item $F=\{F^p(V_{\bb{C}})\}_{p\in\bb{Z}}$ is a finite decreasing filtration on the complexification $V_{\bb{C}}:=V\otimes\bb{C}$ of $V$ and defines a Hodge structure on 
\ali{
Gr_k^W(V_\bb{C}):=\frac{W_k(V)\otimes\bb{C}}{W_{k-1}(V)\otimes\bb{C}}
}
of weight $k$ for all $k\in\bb{Z}$. The decreasing filtration is given by
\ali{
F^p(Gr_k^W(V_\bb{C}))=\frac{F^p(V_{\bb{C}})\cap (W_k\otimes \bb{C})}{W_{k-1}\otimes\bb{C}}.
}
{\it i.e.} there exists a bigrading $Gr_k^W(V_{\bb{C}})=\bo_{p+q=k}\mathcal{H}_{p,q}$ such that
\ali{
\overline{\mathcal{H}_{p,q}}=\mathcal{H}_{q,p}\ \text{ for all }p,q\in\bb{Z}\text{ with }p+q=k.
}
\end{itemize}
\end{definition}
Let $(V,W,F)$ be a mixed Hodge structure. Defining the subspace $V_{p,q}$ of $V_{\bb{C}}$ by
\ali{
V_{p,q}:=F^p(V_{\bb{C}})\cap W_{p+q}(V_{\bb{C}})\cap \left (\overline{F^q(V_{\bb{C}})}\cap W_{p+q}(V_{\bb{C}})+\sum_{i\geq 2}\overline{F^{q-i+1}(V_{\bb{C}})}\cap W_{p+q-i}(V_{\bb{C}})\right)}
where $W_k(V_{\bb{C}}):=W_k\otimes\bb{C}$, we have the bigrading 
\ali{
V_{\bb{C}}=\bo_{p,q}V_{p,q}
}
such that
\ali{
\overline{V_{p,q}}\equiv V_{q,p}&\mod \bo_{s+t<p+q}V_{s,t},\\
W_k(V_{\bb{C}})=&\bo_{p+q\leq k}V_{p,q},
}
and
\ali{
F^p(V_{\bb{C}})&=\bo_{s\geq p,t\in\bb{Z}}V_{s,t}
.}
In \cite{Del}, Deligne showed the following theorem.
\begin{thm}[\cite{Del}]\label{Del}
Let $M$ be an algebraic variety. For all $1\leq j\leq \dim M$, the cohomology $H^j(M)$ admits a mixed Hodge structure $(H^j(M),F,W)$ such that the induced bigrading $H^j(M)_{\bb{C}}=\bo_{p,q\in\bb{Z}}H_{p,q}^j(M)$ is given by
\ali{
H^j(M)_{\bb{C}}=\bo_{(p,q)\in I_j}H_{p,q}^j(M)
}
where $I_j$ is given by
\ali{
I_j:=\{ (p,q)\in \bb{Z}^2\mid j\leq p+q\leq 2j,0\leq p,q\leq j\}.
}
\end{thm}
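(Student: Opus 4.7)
The plan is to follow Deligne's original construction from Hodge II, which applies to the smooth quasi-projective setting that concerns this paper. First I would use Hironaka's resolution of singularities to pass to a smooth projective compactification $V$ of $M$ with $D := V\setminus M$ a simple normal crossings divisor. The foundational step is to establish the quasi-isomorphism $Rj_*\bb{C}_M \simeq \Omega_V^\bullet(\log D)$, where $j:M\hookrightarrow V$ is the inclusion and $\Omega_V^\bullet(\log D)$ is the sheaf of holomorphic forms on $V$ with logarithmic poles along $D$. This identifies $H^*(M,\bb{C})$ with the hypercohomology $\bb{H}^*(V, \Omega_V^\bullet(\log D))$, transporting the problem to a filtered complex on the compact manifold $V$.

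Next I would equip $\Omega_V^\bullet(\log D)$ with two filtrations. The Hodge filtration $F$ is the naive truncation $F^p\Omega_V^\bullet(\log D) := \Omega_V^{\geq p}(\log D)$. The weight filtration $W$ is defined by order of log poles: locally, if $D$ is cut out by $z_1\cdots z_r=0$, then $W_k\Omega_V^p(\log D)$ is spanned by forms involving at most $k$ of the $dz_i/z_i$. The essential geometric input is the Poincar\'e residue isomorphism
\ali{
Gr_k^W\Omega_V^\bullet(\log D) \simeq a_{k*}\Omega_{D^{(k)}}^{\bullet-k}[-k],
}
where $D^{(k)}$ is the disjoint union of $k$-fold intersections of components of $D$ (smooth projective of dimension $\dim V-k$) and $a_k$ is its natural map to $V$.

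With these identifications in hand, the weight spectral sequence has $E_1$-page given by the cohomologies $H^{j-k}(D^{(k)},\bb{C})$, each of which carries a pure Hodge structure of weight $j-k$ by classical Hodge theory on the compact K\"ahler pieces. I would then show degeneration of the weight spectral sequence at $E_2$ and of the Hodge-to-de Rham spectral sequence at $E_1$; together these produce a well-defined mixed Hodge structure on $H^j(M)$ in which $Gr_{j+k}^W H^j(M)$ is a subquotient of $H^{j-k}(D^{(k)})(-k)$, pure of weight $j+k$ with nonnegative Hodge bidegrees. Summing over $k\geq 0$ confines the nonzero $(p,q)$ to $j\leq p+q\leq 2j$ with $p,q\geq 0$, and the de Rham degree bound forces $p,q\leq j$, giving exactly $I_j$. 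The main obstacle is the degeneration step, which is a genuine theorem requiring strict compatibility of $F$ with $W$ on the bi-filtered complex; modulo this, the bounds on $(p,q)$ are a direct consequence of purity of the strata cohomology and the dimension estimate.
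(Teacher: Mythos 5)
The paper offers no proof of this statement: it is quoted as Deligne's theorem from \emph{Th\'eorie de Hodge II} and used as a black box, so there is no internal argument to compare yours against. Your outline is a faithful summary of Deligne's actual construction in the smooth case --- good compactification, the quasi-isomorphism $Rj_*\bb{C}_M\simeq\Omega_V^{\bullet}(\log D)$, the Hodge filtration by naive truncation, the weight filtration by pole order, the residue isomorphism, and the two degeneration statements --- and your bookkeeping is right: $Gr^W_{j+k}H^j(M)$ is a subquotient of $H^{j-k}(D^{(k)})(-k)$, whose Hodge bidegrees $(a+k,b+k)$ with $a+b=j-k$ and $0\leq a,b\leq j-k$ land exactly in $I_j$. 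Two caveats are worth recording. First, the statement as printed concerns an arbitrary ``algebraic variety,'' while your argument (and the inequality $j\leq p+q$ itself) is only valid for \emph{smooth} varieties; for singular $M$ one needs the simplicial hypercovers of \emph{Hodge III}, and the weights then spread over $0\leq p+q\leq 2j$ (a nodal curve already has weight $0$ in $H^1$). Since the paper only ever applies the theorem to smooth quasi-projective $M$, this is a defect of the quoted statement rather than of your proof, but your write-up should say ``smooth'' explicitly. Second, to obtain a mixed Hodge structure on the \emph{real} cohomology $H^j(M)$, as the statement requires, you must also show that $W$ is defined over $\bb{Q}$ --- via the comparison of $(\Omega_V^{\bullet}(\log D),W)$ with $(Rj_*\bb{Q},\tau_{\leq})$ in the filtered derived category --- and that the resulting structure is independent of the chosen compactification. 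These, together with the $E_2$-degeneration you correctly flag as the hard point (Deligne's lemma on two filtrations), are the substantive steps still to be supplied; the skeleton you give is the correct one.
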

In \cite{Morgan}, Morgan showed the following result. The result gives a restriction to the fundamental groups of smooth quasi-projective varieties. 
\begin{thm}[\cite{Morgan},Theorem (8.6)]\label{Morgan}
Let $M$ be a smooth quasi-projective variety. Let $(A,d)$ be an $\bb{R}$-DGA. If $(A,d)$ is a minimal model of the complex $(A^{\bullet}(M),d)$, then the complexification $A_{\bb{C}}$ admits the bigrading of a mixed Hodge structure $A_{\bb{C}}=\bo_{p,q,p+q\geq 0}A_{p,q}$ such that the induced bigrading $H^*(A_{\bb{C}},d)=\bo_{p,q,p+q\geq 0}H_{p,q}^*(A_{\bb{C}})$ is compatible with a bigrading of the Deligne's mixed Hodge structure on $H^*(M)$.
\end{thm}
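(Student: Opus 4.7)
My plan is to realize the minimal model of $A^*(M)$ inside Deligne's logarithmic de Rham complex and then to bigrade it by induction, using the canonical Deligne splitting of a mixed Hodge structure to transport the bigrading from the logarithmic complex up to the minimal model.

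First, I would fix a smooth compactification $V\supset M$ with $D=V\setminus M$ a divisor with normal crossings, and work with the logarithmic complex $E^*:=\Omega_V^*(\log D)$. The inclusion $E^*\hookrightarrow A^*(M)_{\bb{C}}$ is a quasi-isomorphism, and $E^*$ carries the weight filtration $W$ (by the number of log poles) together with the Hodge filtration $F$ (by holomorphic degree); these induce Deligne's mixed Hodge structure on $H^*(M)$. Deligne's splitting yields a canonical bigrading $E^*_{\bb{C}}=\bo_{p,q}E^*_{p,q}$ compatible with $W$, $F$, and complex conjugation up to terms of lower weight. Since minimal models are unique up to isomorphism, it then suffices to produce a bigraded minimal model of $E^*$.

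Next, I would construct such a minimal model $f:(\bw V,d)\rightarrow E^*$ inductively along a well-ordered basis $\{a_k\}_{k\in I}$ of the generating space $V$. At each stage one adjoins a generator either to hit a new cohomology class of $E^*$ (a minimal Hirsch extension) or to kill a spurious cocycle created at earlier stages. Because the MHS on $H^*(E^*)$ is induced from the bigrading on $E^*$, the classes and cocycles to be matched both decompose into bigraded pieces; so each new generator $a_k$ can be assigned a bidegree $(p,q)$, the image $f(a_k)$ can be chosen inside $E^*_{p,q}$, and the differential $d(a_k)\in \bw\text{Span}\{a_l\mid l<k\}$ automatically falls into the correct bigraded summand, since the attaching obstruction has a well-defined bidegree. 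Passing to cohomology of $(\bw V,d)$ then matches the bigrading inherited from $(A_{\bb{C}},d)$ with Deligne's bigrading on $H^*(M)$.

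The main obstacle is a bigraded lifting lemma at the induction step: given a $d$-cocycle $z\in E^*_{p,q}$ which is a coboundary in $E^*$, one must produce a primitive $w\in E^*_{p,q}$ with $dw=z$. This is a strictness statement for the differential $d$ with respect to the Deligne bigrading, and it follows from the strictness of morphisms of mixed Hodge structures with respect to both the weight and Hodge filtrations, once one observes that $d$ on $E^*$ is a morphism of MHS in each degree. Once this bigraded exactness is established, the inductive construction runs through without further obstruction, and the resulting bigrading on $(A,d)$ satisfies the conclusion of the theorem.
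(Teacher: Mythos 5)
The paper does not prove this statement: it is quoted verbatim as Theorem (8.6) of Morgan's paper, so there is no internal proof to compare against. Judged against Morgan's actual argument, your sketch has the right outer shape (replace $A^{\bullet}(M)$ by the logarithmic complex, build the minimal model inductively, assign bidegrees to generators as you go), but it breaks down at the step you yourself identify as the crux. The Deligne splitting $V_{\bb{C}}=\bo V_{p,q}$ exists only for an honest mixed Hodge structure, i.e.\ when $F$ induces a \emph{pure} Hodge structure of weight $k$ on each $Gr_k^W$. The filtered complex $(E^*,W,F)$ is not degreewise a mixed Hodge structure: purity of $Gr_k^W$ holds only after passing to cohomology (that is the content of $E^*$ being a mixed Hodge \emph{complex}), so there is no canonical bigrading of $E^*$ itself, $d$ is not a morphism of mixed Hodge structures in each degree, and the ``bigraded lifting lemma'' cannot be extracted from strictness of morphisms of MHS. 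A further unaddressed point is the real structure: the log complex is not stable under complex conjugation, so compatibility of the bigrading with conjugation cannot be arranged at the cochain level; Morgan needs the whole formalism of mixed Hodge diagrams, with a separate real filtered complex and comparison quasi-isomorphisms, precisely to handle this.

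What Morgan actually does to get around both problems is to descend to the $E_1$-term of the weight spectral sequence, where Deligne's theory \emph{does} produce genuine (pure) Hodge structures; he builds a bigraded minimal model of that $E_1$-term (his Sections 6--7, ``filtered and bigraded minimal models''), and then uses degeneration of the weight spectral sequence at $E_2$ together with the principle of two types to lift the bigrading to a minimal model of the original complex and to verify the conjugation condition modulo lower weight. If you want to turn your sketch into a proof, you must replace ``Deligne splitting of $E^*$'' by this passage through the weight spectral sequence (or an equivalent device); as written, the induction has no bigraded target to map into and the key exactness statement is unsupported.
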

Combining the results of Nomizu (\cref{Nomizu}), Morgan (\cref{Morgan}) and Deligne (\cref{Del}) we have the following corollary.
\begin{cor}\label{exNomizu}
Let $M$ be a smooth quasi-projective variety. Assume that $M$ is diffeomorphic to a non-compact nilmanifold $\Gamma\bs N\times \bb{R}^m$. Let $\fr{n}$ be the Lie algebra of $N$. Then $\fr{n}_{\bb{C}}$ admits the bigrading $\fr{n}_{\bb{C}}=\bo_{p,q\leq 0,p+q\leq -1}\fr{n}_{p,q}$ of a mixed Hodge structure such that for the induced bigrading $H^*(\fr{n}_{\bb{C}})=\bo_{p,q}H_{p,q}^*(\fr{n}_{\bb{C}})$, the following conditions hold:
\ali{
&\bullet\ H^j(\fr{n}_{\bb{C}})=\bo_{(p,q)\in I_j}H_{p,q}^j(\fr{n}_{\bb{C}})\\
&\bullet\ \overline{H_{p,q}^j(\fr{n}_{\bb{C}})}\equiv H_{q,p}^j(\fr{n}_{\bb{C}}) \mod \bo_{s+t<p+q}H_{s,t}^j(\fr{n}_{\bb{C}})
}
for all $0\leq j\leq \dim\fr{n}_{\bb{C}}$ where $I_j$ is given by
\ali{
I_j:=\{ (p,q)\in \bb{Z}^2\mid j\leq p+q\leq 2j,0\leq p,q\leq j\}.
}
\end{cor}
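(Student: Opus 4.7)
The plan is to combine the three preceding theorems directly. First, since $\bb{R}^m$ is contractible, the diffeomorphism $M\cong\Gamma\bs N\times\bb{R}^m$ gives a homotopy equivalence $M\simeq\Gamma\bs N$, so the real minimal model of $A^*(M)$ coincides with that of $A^*(\Gamma\bs N)$. By \cref{Nomizu}, this minimal model is the Chevalley--Eilenberg complex $(\bw^\bullet\fr{n}^*,d)$, whose degree-one part is $\fr{n}^*$ and whose differential is dual to the Lie bracket.

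Second, I would apply \cref{Morgan} to the smooth quasi-projective variety $M$. This equips $(\bw^\bullet\fr{n}^*)_{\bb{C}}$ with a bigrading of a mixed Hodge structure whose induced bigrading on cohomology is compatible with Deligne's mixed Hodge structure on $H^*(M)_{\bb{C}}$. Restricting to the degree-one generating subspace gives a decomposition $\fr{n}^*_{\bb{C}}=\bo_{p,q}(\fr{n}^*)_{p,q}$; because Morgan's weight and Hodge filtrations are designed to match Deligne's, and Deligne's mixed Hodge structure on $H^1$ is supported in $I_1=\{(1,0),(0,1),(1,1)\}$ by \cref{Del}, the support on $\fr{n}^*_{\bb{C}}$ must lie in $\{(p,q):p,q\geq 0,\,p+q\geq 1\}$.

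Third, I would dualize by setting $\fr{n}_{p,q}:=((\fr{n}^*)_{-p,-q})^*$ and extending via the natural pairing. This produces the claimed bigrading
\ali{
\fr{n}_{\bb{C}}=\bo_{p,q\leq 0,\,p+q\leq -1}\fr{n}_{p,q},
}
and the Hodge and weight filtrations transport to $\fr{n}_{\bb{C}}$, giving the required mixed Hodge structure. Since Morgan's bigrading makes the Chevalley--Eilenberg differential bihomogeneous of bidegree $(0,0)$, its dual (the Lie bracket) is bihomogeneous, so the bigrading on $\fr{n}_{\bb{C}}$ is compatible with the Lie algebra structure, and the bigrading induced back on $\bw^\bullet\fr{n}^*_{\bb{C}}$ recovers Morgan's original bigrading.

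Finally, \cref{Nomizu} identifies $H^*(\fr{n}_{\bb{C}})\cong H^*(M)_{\bb{C}}$, and by Morgan's compatibility the induced bigrading on $H^*(\fr{n}_{\bb{C}})$ agrees with Deligne's bigrading on $H^*(M)_{\bb{C}}$. The two bullet conditions then follow from \cref{Del}: the support $H^j(\fr{n}_{\bb{C}})=\bo_{(p,q)\in I_j}H_{p,q}^j(\fr{n}_{\bb{C}})$ is exactly Deligne's statement, and the conjugation relation modulo lower weights is part of the definition of a mixed Hodge structure. The step I expect to demand the most care is the index bookkeeping in the dualization: verifying that Morgan's bigrading on $\fr{n}^*_{\bb{C}}$ really is supported in the positive range $\{p,q\geq 0,\,p+q\geq 1\}$, so that flipping signs places the bigrading on $\fr{n}_{\bb{C}}$ in the claimed non-positive range and that the Hodge/weight filtrations transport cleanly to the dual.
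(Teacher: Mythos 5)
Your proposal is correct and follows essentially the same route as the paper, which derives this corollary precisely by combining Nomizu's identification of $(\bw^{\bullet}\fr{n}^*,d)$ as the minimal model, Morgan's bigraded mixed Hodge structure on that minimal model, and Deligne's support condition on $H^j(M)$. The one step you rightly flag as delicate --- that the bigrading on the degree-one generators lands in $p,q\geq 0$, $p+q\geq 1$ before dualizing --- is also left implicit in the paper, and follows by the standard induction using bihomogeneity of $d$ and the support of $H^1$ in $I_1$.
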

As a consequence, we have the following.
\begin{cor}
Let $N$ be a simply-connected nilpotent real Lie group. If the complexification $\fr{n}_{\bb{C}}$ of the Lie algebra $\fr{n}$ of $N$ does not admit a bigrading in \cref{exNomizu}, then the non-compact nilmanifold $M:=\Gamma\bs N\times \bb{R}^m$ is not a smooth quasi-projective variety for any lattice $\Gamma$ in $N$ and integer $m>0$.  
\end{cor}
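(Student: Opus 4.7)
The plan is to prove this corollary as the direct contrapositive of Corollary~\ref{exNomizu}, which has already been established by combining the results of Nomizu, Deligne, and Morgan. No genuinely new input is needed; the statement is packaged separately because it highlights the obstruction-theoretic direction that will be applied in later sections.

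More explicitly, I would argue as follows. Suppose for contradiction that there exist a lattice $\Gamma\subset N$ and an integer $m>0$ such that $M:=\Gamma\bs N\times\bb{R}^m$ is a smooth quasi-projective variety. Since $M$ is by construction diffeomorphic to a non-compact nilmanifold whose associated simply-connected nilpotent Lie group is $N$, the hypotheses of Corollary~\ref{exNomizu} are satisfied. Applying that corollary produces a bigrading
\[
\fr{n}_{\bb{C}}=\bo_{p,q\leq 0,\,p+q\leq -1}\fr{n}_{p,q}
\]
of a mixed Hodge structure on $\fr{n}_{\bb{C}}$, whose induced bigrading on $H^*(\fr{n}_{\bb{C}})$ satisfies the two bulleted conditions listed in Corollary~\ref{exNomizu}. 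This is precisely the type of bigrading that the hypothesis of the present corollary forbids, yielding a contradiction. Hence no such $\Gamma$ and $m$ can exist.

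There is no substantial obstacle: the entire content resides in Corollary~\ref{exNomizu}, and this corollary merely re-states its contrapositive in a form convenient for producing non-examples. The only point to note is that the conclusion is uniform over all choices of lattice $\Gamma$ in $N$ and all positive integers $m$, since the bigrading in Corollary~\ref{exNomizu} depends only on the Lie algebra $\fr{n}$ of $N$ and not on the auxiliary data $(\Gamma,m)$; thus a single failure of $\fr{n}_{\bb{C}}$ to admit such a bigrading simultaneously rules out quasi-projectivity for every $M=\Gamma\bs N\times\bb{R}^m$.
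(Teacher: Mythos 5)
Your argument is correct and is exactly what the paper intends: the statement is presented there with no separate proof, as an immediate consequence (the contrapositive) of \cref{exNomizu}, which is precisely how you argue. Your added remark that the bigrading depends only on $\fr{n}$ and not on $(\Gamma,m)$ correctly justifies the uniformity of the conclusion.
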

\begin{eg}\label{eg}
Let $\fr{n}=\ang{X_1,...,X_n}$ be the Lie algebra such that $[X_1,X_i]=X_{i+1}$ for $2\leq i\leq n-1$ and $[X_i,X_j]=0$ for other $i<j$. Let $N$ be a simply-connected nilpotent Lie group whose Lie algebra is isomorphic to $\fr{n}$. For every lattice $\Gamma$ in $N$, by \cite[Corollary3.1]{Shimoji}, we see that $M:=\Gamma\bs N\times \bb{R}^{m}$ is smooth quasi-projective variety for some $m>0$ if and only if $n=2,3$. Structures of quasi-projective varieties are given by 
\ali{
M=\bb{Z}^2\bs \bb{R}^2\times \bb{R}^{2m}
}   
and 
\ali{
M=\Gamma\bs N\times \bb{R}^{2m+1}=(\Gamma\bs N\times\bb{R})\times\bb{R}^{2m}
}
where $\Gamma\bs N\times\bb{R}$ is a principal $\bb{C}^*$-bundle over a torus $T^2$.
\end{eg}
\begin{eg}\label{eg2}
Let $\fr{n}_{\bb{C}}=\bb{C}^n$. We define $\fr{n}_{-1,-1}:=\bb{C}^n$. Then we have 
$H^p(\bb{C}^n)=H_{p,p}^p(\bb{C}^n)$ Therefore $\bb{C}^n$ admits a bigrading in \cref{exNomizu}.
\end{eg}
\begin{eg}\label{eg3}
Let $\fr{n}_{\bb{C}}=\fr{n}_{2k+1}(\bb{C})$ for $k\geq 1$. Let $X_1,\dots,X_k,\overline{X_1},\dots, \overline{X_k},Z$ be a basis of $\fr{n}_{2k+1}(\bb{C})$ such that $[X_i,\overline{X_i}]=Z$ for $1\leq i\leq k$. We define $\fr{n}_{-1,0}:=\ang{X_1,\dots,X_k}$, $\fr{n}_{0,-1}:=\ang{\overline{X_1},\dots,\overline{X_k}}$ and $\fr{n}_{-1,-1}:=\ang{Z}$. Then the bigrading satisfies the condition in \cref{exNomizu}. For example, we consider a bigrading on $\fr{n}_3(\bb{C})$. The bigradings on the cohomologies of $\fr{n}_3(\bb{C})$ are given by
\ali{
H^1(\fr{n}_{\bb{C}})&=\ang{[x_1]}_{1,0}\o\ang{[y_1]}_{0,1},\\
H^2(\fr{n}_{\bb{C}})&=\ang{[x_1\w z]}_{2,1}\o\ang{[y_1\w z]}_{1,2},\\
H^3(\fr{n}_{\bb{C}})&=\ang{[x_1\w y_1\w z]}_{2,2}.
}
Moreover we see that the bigradings on $\fr{n}_{2k+1}(\bb{C})$ as above satisfies the condition in \cref{exNomizu} for other $k$.
\end{eg}
\begin{eg}\label{eg4}
Let $\fr{n}_7^{142}$ and $\fr{n}_7^{143}$ be the Lie algebras given by
\ali{
\fr{n}_7^{142}&=\ang{X_1,\dots,X_7}: [X_1,X_i]=X_{i-1}\text{ for i=3,5,7 },[X_3,X_5]=X_4,[X_5,X_7]=X_2,\\
\fr{n}_7^{143}&=\ang{X_1,\dots,X_7}: [X_1,X_i]=X_{i-1}\text{ for i=3,5,7 },[X_3,X_5]=X_6,[X_5,X_7]=X_2.
}
By the classification of seven-dimensional nilpotent Lie algebras \cite{Nil}, we have isomorphisms $\fr{n}_7^{142}\cong (37D)$ and $\fr{n}_7^{143}\cong (37B)$ where $(37D)$ and $(37B)$ are given by
\ali{
(37D)&=\ang{e_1,\dots,e_7}:[e_1,e_2]=e_5=[e_3,e_4],[e_1,e_3]=e_6,[e_2,e_4]=e_7,\\
(37B)&=\ang{e_1,\dots,e_7}:[e_1,e_2]=e_5,[e_2,e_3]=e_6,[e_3,e_4]=e_7
}
where the basis transformations are given by
\ali{
(37D)&:e_1:=-X_7+i(X_1-X_3),e_2:=X_3-iX_5,e_3:=\overline{e_1},e_4:=\overline{e_2},e_5:=-2iX_6,e_6:=2iX_2,e_7:=2iX_4,\\
(37B)&:e_1:=X_3+iX_7,e_2:=X_1-iX_5,e_3:=\overline{e_2},e_4:=\overline{e_1},e_5:=-2(X_2+iX_6),e_6:=2(X_2-iX_6),e_7:=2iX_4.
}
By the same argument as in \cref{eg2} and \cref{eg4}, we see that the Lie algebras $(37D)$ and $(37B)$ satisfy the condition of \cref{exNomizu}. One can define bigradings on $(37D)$ and $(37B)$ by
\ali{
(37D)=\ang{e_1,e_2}_{-1,0}\o\ang{e_3,e_4}_{0,-1}\o\ang{e_5,e_6,e_7}_{-1,-1},\\
(37B)=\ang{e_1,e_3}_{-1,0}\o\ang{e_2,e_4}_{0,-1}\o\ang{e_5,e_6,e_7}_{-1,-1}.
}
\end{eg}
\section{Main Results}
In this section, we consider the following problem.
\begin{prob}
When is a smooth quasi-projective variety diffeomorphic to a non-compact nilmanifold ?
\end{prob}
We first consider the case where the first cohomology of $M$ reflects only the information of divisors.
\begin{thm}\label{abel}
Let $M=V\setminus D$ be a smooth quasi-projective variety. Assume that $M$ is diffeomorphic to a non-compact nilmanifold $\Gamma\bs N\times \bb{R}^m$. If $H^1(V)=\0$ for the compactification $V$ of $M$, then $M$ is diffeomorphic to a trivial bundle $T^n\times \bb{R}^m$ over a torus $T^n$ where $n=\dim N$. In particular, $M$ is diffeomorphic to a trivial bundle $T^{b_1(M)}\times \bb{R}^m$ over a $b_1(M)$-dimensional torus.\end{thm}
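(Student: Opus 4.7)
The plan is to combine the mixed Hodge bigrading of Corollary \ref{exNomizu} with the hypothesis $H^1(V)=\0$ to force the Chevalley--Eilenberg differential on $\fr{n}^*$ to vanish, thereby making $\fr{n}$ abelian. By Corollary \ref{exNomizu}, the dual $\fr{n}^*_{\bb{C}}$ inherits a bigrading $\fr{n}^*_{\bb{C}}=\bo_{p,q\ge 0,\,p+q\ge 1}(\fr{n}^*)_{p,q}$ with $d$ preserving bidegree, and the induced bigrading on $H^1(\fr{n}_{\bb{C}})$ is supported in $I_1=\{(1,0),(0,1),(1,1)\}$. Moreover, the mixed Hodge minimal model construction of Morgan (\cref{Morgan}) bounds the weights of the degree-$1$ generators from above by $2$, so $(\fr{n}^*)_{p,q}$ can be nonzero only for $(p,q)\in\{(1,0),(0,1),(2,0),(1,1),(0,2)\}$.

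Next, the hypothesis $H^1(V)=\0$ enters through the Deligne weight exact sequence $0\to H^1(V)\to H^1(M)\to H^0(\tilde D)(-1)$ for the normalization $\tilde D$ of $D$; since $H^0(\tilde D)(-1)$ is pure of type $(1,1)$, it forces $H^1(M)=H^1_{1,1}(\fr{n}_{\bb{C}})$, and in particular $H^1_{1,0}=H^1_{0,1}=0$. I would then eliminate the admissible off-$(1,1)$ bidegrees in order of $p+q$. For $(p,q)\in\{(1,0),(0,1)\}$, any decomposition $(p_1,q_1)+(p_2,q_2)=(p,q)$ with both summands admissible would need $p+q\ge 2$, so $(\bw^2\fr{n}^*)_{p,q}=0$, the piece $(\fr{n}^*)_{p,q}$ is closed, and the purity of $H^1$ forces it to vanish. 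Using this, $(\bw^2\fr{n}^*)_{2,0}$ and $(\bw^2\fr{n}^*)_{0,2}$ then vanish, since every admissible decomposition passes through the now-vanishing $(1,0)$ or $(0,1)$ pieces, so the same argument gives $(\fr{n}^*)_{2,0}=(\fr{n}^*)_{0,2}=0$.

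At this stage $\fr{n}^*_{\bb{C}}=(\fr{n}^*)_{1,1}$ is pure of type $(1,1)$, and $(\bw^2\fr{n}^*)_{1,1}\subseteq (\fr{n}^*)_{1,0}\w(\fr{n}^*)_{0,1}=\0$, so $d\equiv 0$ on $\fr{n}^*$. Dually, the Lie bracket of $\fr{n}$ is identically zero, so $\fr{n}$ is abelian, $N\cong\bb{R}^n$ with $n=\dim N$, and consequently $\Gamma\bs N\cong T^n$ and $M\cong T^n\times\bb{R}^m$. Finally, because $\fr{n}$ is abelian we have $b_1(M)=\dim H^1(\fr{n}_{\bb{C}})=\dim\fr{n}^*=n$, which gives the last sentence.

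The principal difficulty is the weight upper bound $p+q\le 2$ on the degree-$1$ generators of the minimal model. The lower bound $p+q\ge 1$ follows immediately from positivity of the degree, but the upper bound is the deeper input from Morgan's construction and is precisely what rules out diagonal pieces $(\fr{n}^*)_{k,k}$ for $k\ge 2$ that would otherwise be consistent with every constraint visible only from the cohomology of $\fr{n}$; thus Morgan's structural input, rather than Corollary \ref{exNomizu} in isolation, is what makes the combinatorial elimination close.
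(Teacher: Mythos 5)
Your reduction to purity of $H^1(M)$ and your elimination of the off-diagonal pieces $(1,0),(0,1),(2,0),(0,2)$ are sound and consistent with the paper, which likewise concludes from $H^1(M)_{\bb{C}}=H^1_{1,1}(M)\cong H^1(\fr{n}_{\bb{C}})$ that the bigrading of \cref{exNomizu} is diagonal, $\fr{n}_{\bb{C}}=\fr{n}_{-1,-1}\o\fr{n}_{-i_1,-i_1}\o\cdots\o\fr{n}_{-i_k,-i_k}$. The gap is exactly the step you flag as the crux: there is no weight bound $p+q\le 2$ on the degree-one generators of Morgan's mixed-Hodge minimal model. \cref{Morgan} and \cref{exNomizu} permit $\fr{n}_{p,q}$ for all $p,q\le 0$ with $p+q\le -1$; the paper's Section 6 exhibits a Morgan-compatible bigrading on a $3$-step Lie algebra with pieces in bidegrees $(-2,-1)$ and $(-1,-2)$ (weight-three generators); and when $H^1$ is pure of type $(1,1)$ the inductive construction of the $1$-minimal model naturally produces degree-one generators of types $(2,2),(3,3),\dots$ (consider the free, hence arbitrarily deep, Malcev Lie algebra of $\pi_1(\bb{C}\setminus\{0,1\})$, whose $H^1$ is pure of type $(1,1)$). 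So the bound you invoke fails precisely for the diagonal pieces $(\fr{n}^*)_{k,k}$, $k\ge 2$, that you need to kill, and your argument does not close.

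The paper closes it with a purely cohomological mechanism that you assert is unavailable. For a nilpotent Lie algebra the volume form represents a nonzero class in the top cohomology $H^{\dim\fr{n}_{\bb{C}}}(\fr{n}_{\bb{C}})$, and with the diagonal bigrading its bidegree is $\bigl(\sum_j i_jm_j,\sum_j i_jm_j\bigr)$, where $m_j=\dim\fr{n}_{-i_j,-i_j}$ and $i_0=1$. Deligne's constraint $0\le p,q\le j$ on $H^j_{p,q}$, applied at $j=\dim\fr{n}_{\bb{C}}=\sum_j m_j$, gives $\sum_j i_jm_j\le\sum_j m_j$, i.e.\ $\sum_j(i_j-1)m_j\le 0$, which forces $m_j=0$ whenever $i_j\ge 2$. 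Hence $\fr{n}_{\bb{C}}=\fr{n}_{-1,-1}$, so $[\fr{n}_{\bb{C}},\fr{n}_{\bb{C}}]\subset\fr{n}_{-2,-2}=\0$ and $\fr{n}$ is abelian; the rest of your conclusion then follows as you wrote it. To repair the proposal you should replace the claimed weight bound by this top-degree argument (or an equivalent one).
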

\begin{proof}
Let $\fr{n}$ be the Lie algebra of $N$. By the isomorphism 
\ali{
H^1(M)_{\bb{C}}=H_{1,1}^1(M)\cong H^1(\fr{n}_{\bb{C}})
}
and \cref{exNomizu}, we have a bigrading on $\fr{n}_{\bb{C}}$
\ali{
\fr{n}_{\bb{C}}=\fr{n}_{-1,-1}\o\fr{n}_{-i_1,-i_1}\o\cdots\o\fr{n}_{-i_k,-i_k}
}
and the induced bigrading 
\ali{
H^1(\fr{n}_{\bb{C}})&=H_{1,1}^1(\fr{n}_{\bb{C}})=\fr{n}_{-1,-1}^*,\\
H^p(\fr{n}_{\bb{C}})&=H_{p,p}^p(\fr{n}_{\bb{C}})
}
for all $p\geq 2$. Let $x_1^j,\dots,x_{m_j}^j$ be a basis of $\fr{n}_{-i_j,-i_j}^*$ where $m_j=\dim\fr{n}_{-i_j,-i_j}^*$ and $\fr{n}_{-i_0,-i_0}^*:=\fr{n}_{-1,-1}^*$. Since $\fr{n}_{\bb{C}}$ is a nilpotent Lie algebra, the cohomology class 
\ali{
[x_1^1\w\cdots\w x_{m_1}^1\w\cdots\w x_{m_k}^k]
}
does not vanish in $H_{n,n}^{\dim\fr{n}_{\bb{C}}}(\fr{n}_{\bb{C}})\subset H^{\dim\fr{n}_{\bb{C}}}(\fr{n}_{\bb{C}})=H_{\dim\fr{n}_{\bb{C}},\dim\fr{n}_{\bb{C}}}^{\dim\fr{n}_{\bb{C}}}(\fr{n}_{\bb{C}})$. Thus we have
\ali{
\sum_{j=0}^km_j&=\dim\fr{n}_{\bb{C}}=n=\sum_{j=0}^ki_jm_j\\
&=\sum_{j=0}^km_j+\sum_{j=0}^k(i_j-1)m_j
}
and hence
\ali{
0=\sum_{j=0}^k(i_j-1)m_j=\sum_{j=1}^k(i_j-1)m_j.
}
Since $i_j-1>0$ for all $1\leq j\leq k$, we have $m_j=0$ for all $1\leq j\leq k$. Thus $N$ is abelian Lie group. Therefore there exists an integer $n\geq 1$ such that $\fr{n}_{\bb{C}}$ is isomorphic to an abelian Lie algebra $\bb{C}^n$ and $M=\bb{Z}^n\bs\bb{R}^n\times \bb{R}^m$. 
\end{proof}
We next show the following. The idea of the proof is the same as \cref{abel}.
\begin{thm}\label{abel2step}
Let $M$ be a smooth quasi-projective variety. Assume that $M$ is diffeomorphic to a non-compact nilmanifold $\Gamma\bs N\times \bb{R}^m$. Then $M$ is diffeomorphic to a trivial bundle $T^{b_1(M)}\times\bb{R}^m$ over a $b_1(M)$-dimensional torus $T^{b_1(M)}$ or a trivial bundle $E\times \bb{R}^m$ such that $E$ is a torus bundle $E\rightarrow T^{b_1(M)}$ over a torus $T^{b_1(M)}$. In particular, the fundamental group $\pi_1(M,x)$ is torsion-free nilpotent and the nilpotency class is at most two. 
\end{thm}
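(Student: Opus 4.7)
The plan is to run the argument of \cref{abel} without the simplifying input $H^1(V)=\0$, now allowing all three Deligne pieces $H^1_{1,0}\o H^1_{0,1}\o H^1_{1,1}$ in $H^1(M)_{\bb{C}}$.

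First, I would sharpen the bigrading of \cref{exNomizu}. For a smooth quasi-projective $M$, Morgan's theorem equips the degree-one generators $V^1=\fr{n}^*$ of the minimal model with a mixed Hodge structure whose weights lie in $\{1,2\}$ (the standard bound $W_kA^p=A^p$ for $k\geq 2p$ in a mixed Hodge diagram, applied to $p=1$). Dualizing, the bigrading on $\fr{n}_{\bb{C}}$ is concentrated at weights $-1$ and $-2$:
\[
\fr{n}_{\bb{C}}=\fr{n}_{-1,0}\o\fr{n}_{0,-1}\o\fr{n}_{-1,-1}\o\fr{n}_{-2,0}\o\fr{n}_{0,-2},
\]
with $\dim\fr{n}_{-1,0}=\dim\fr{n}_{0,-1}$ and $\dim\fr{n}_{-2,0}=\dim\fr{n}_{0,-2}$ (exact equality, since no weight-$\leq -3$ pieces appear in the modular correction of the conjugation relation). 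Since $(2,0),(0,2)\notin I_1$, the cohomology constraint of \cref{exNomizu} forces $\fr{n}^*_{2,0}$ and $\fr{n}^*_{0,2}$ to carry no $H^1$-classes, so $\fr{n}_{-2,0}=[\fr{n}_{-1,0},\fr{n}_{-1,0}]$ and $\fr{n}_{0,-2}=[\fr{n}_{0,-1},\fr{n}_{0,-1}]$ both sit inside $[\fr{n},\fr{n}]$.

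Second, since the Lie bracket is a morphism of mixed Hodge structures, it respects the weight filtration $[W_{-i}\fr{n},W_{-j}\fr{n}]\subset W_{-i-j}\fr{n}$. Hence
\[
[\fr{n},\fr{n}]\subset W_{-2}\fr{n},\qquad [\fr{n},[\fr{n},\fr{n}]]\subset W_{-3}\fr{n}=\0,
\]
so $\fr{n}$ is at most two-step nilpotent and $\pi_1(M,x)=\Gamma$ is torsion-free nilpotent of class at most two.

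Third, I would read off the bundle description. Set $\fr{z}_0:=[\fr{n},\fr{n}]$; this is an abelian central ideal, defined over $\bb{Q}$ because $\fr{n}$ is (by \cref{lattice}). Writing $Z_0:=\exp\fr{z}_0\subset N$, the intersection $\Gamma_0:=\Gamma\cap Z_0$ is a lattice in $Z_0$, and $\Gamma/\Gamma_0$ is a lattice in the abelian quotient $N/Z_0\cong\bb{R}^{b_1(M)}$. The projection $\Gamma\bs N\to (\Gamma/\Gamma_0)\bs(N/Z_0)=T^{b_1(M)}$ is then a principal $T^{\dim\fr{z}_0}$-bundle; together with the trivial $\bb{R}^m$-factor this yields $M\cong E\times\bb{R}^m$ with $E\to T^{b_1(M)}$ a torus bundle. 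If instead $\fr{z}_0=\0$, then $\fr{n}$ is abelian, $E=T^{b_1(M)}$, and $M\cong T^{b_1(M)}\times\bb{R}^m$.

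The main obstacle is the sharp weight bound: \cref{exNomizu} as stated records only $p+q\leq -1$, so one genuinely needs the additional input from Morgan's theorem that the MHS on the Malcev Lie algebra of $\pi_1$ of a smooth quasi-projective variety has weights in $\{-1,-2\}$. Once this refinement is in hand, the rest of the proof is a direct extension of the top-form manipulation already carried out in \cref{abel}.
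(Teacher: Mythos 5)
The decisive step of your argument --- that Morgan's mixed Hodge structure on the minimal model concentrates the degree-one generators in weights $1$ and $2$, so that $\fr{n}_{\bb{C}}=\fr{n}_{-1,0}\o\fr{n}_{0,-1}\o\fr{n}_{-1,-1}\o\fr{n}_{-2,0}\o\fr{n}_{0,-2}$ --- is not available, and this is a genuine gap. The bound $W_kH^p=H^p$ for $k\geq 2p$ holds for the cohomology of a smooth variety (this is the content of \cref{Del}), but it does not hold for the degree-$p$ part of the minimal model: generators introduced at the later stages of the construction acquire weights that are sums of weights of earlier generators, so $V^1=\fr{n}^*$ may well contain pieces of type $(2,1)$, $(1,2)$, $(2,2)$, and so on. Concretely, the eight-dimensional three-step Lie algebra $\fr{g}$ displayed in \cref{Prework} carries a bigrading with summands $\ang{C_1}_{-2,-1}\o\ang{\overline{C_1}}_{-1,-2}$ that satisfies all of Morgan's conditions from \cite[Theorem (9.4)]{Morgan}; this is precisely why the method of \cite{Shimoji} breaks down from rank $8$ onwards. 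So your appeal to Morgan's theorem assumes exactly what has to be proved, and the subsequent deduction $[\fr{n},[\fr{n},\fr{n}]]\subset W_{-3}\fr{n}=\0$ has no basis.

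The paper's proof instead exploits the extra input that is special to nilmanifolds: by Nomizu's theorem the minimal model is $\bw\fr{n}^*$ for a \emph{finite-dimensional} nilpotent $\fr{n}$, whose top cohomology $H^{\dim\fr{n}_{\bb{C}}}(\fr{n}_{\bb{C}})$ is one-dimensional and generated by the volume form, of Hodge type $(s,t)$ with $s=\sum(-p)\dim\fr{n}_{p,q}$ and $t=\sum(-q)\dim\fr{n}_{p,q}$. The constraint $0\leq s,t\leq j$ from $I_j$, applied in the top degree $j=\dim\fr{n}_{\bb{C}}$, is what kills the low-weight pieces and forces $\fr{n}_{\bb{C}}=\fr{n}_{-1,0}\o\fr{n}_{0,-1}\o\fr{n}_{-1,-1}$, hence the two-step conclusion; this finiteness/top-form mechanism is entirely absent from your outline. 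Your third step --- passing from ``$\fr{n}$ is at most two-step'' to the torus-bundle description of $\Gamma\bs N$ via the rational central ideal $[\fr{n},\fr{n}]$ --- is fine and agrees with the paper's citation of \cite{PS61}, but without a correct replacement for your first step the theorem is not proved. To salvage your outline, you must derive the weight restriction from the conditions of \cref{exNomizu} in higher cohomological degrees (in particular the top one), not from Morgan's theorem alone.
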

\begin{proof}
Let $\fr{n}$ be the Lie algebra of $N$. By \cref{exNomizu}, there exists a bigrading $\fr{n}_{\bb{C}}=\bo_{p+q\leq -1,p,q\leq 0}\fr{n}_{p,q}$ such that for the induced bigrading $H^*(\fr{n}_{\bb{C}})=\bo_{p+q\geq 1,p,q\geq 1}H_{p,q}^*(\fr{n}_{\bb{C}})$, we have
\ali{
H^j(\fr{n}_{\bb{C}})&=\bo_{(p,q)\in I_j}H_{p,q}^j(\fr{n}_{\bb{C}}),\\
\overline{H_{p,q}^j(\fr{n}_{\bb{C}})}&\equiv H_{q,p}^j(\fr{n}_{\bb{C}})\mod \bo_{s+t<p+q}H_{s,t}^j(\fr{n}_{\bb{C}})
}  
for all $j\geq 1$ where $I_j$ is as in \cref{exNomizu}. Since $\fr{n}_{\bb{C}}$ is a nilpotent Lie algebra, there exists a $(\dim\fr{n}_{\bb{C}})$-form $\omega$ such that 
\ali{
0\neq [\omega]\in H_{s,t}^{\dim\fr{n}_{\bb{C}}}(\fr{n}_{\bb{C}}).
}
Since 
\ali{
s=\sum_{\fr{n}_{p,q}\neq \0}(-p)\dim\fr{n}_{p,q}\leq \dim\fr{n}_{\bb{C}}=\sum_{\fr{n}_{p,q}\neq \0}\dim\fr{n}_{p,q}
}
and
\ali{
t=\sum_{\fr{n}_{p,q}\neq \0}(-q)\dim\fr{n}_{p,q}\leq \dim\fr{n}_{\bb{C}}=\sum_{\fr{n}_{p,q}\neq \0}\dim\fr{n}_{p,q},
} 
we have
\ali{
\sum_{\fr{n}_{p,q}\neq \0}(-p-1)\dim\fr{n}_{p,q}\leq 0
}
and
\ali{
\sum_{\fr{n}_{p,q}\neq \0}(-q-1)\dim\fr{n}_{p,q}\leq 0.
}
Thus $\fr{n}_{p,q}=\0$ if $(p,q)\neq (-1,0),(0,-1)$ and $(-1,-1)$. Therefore $\fr{n}_{\bb{C}}$ is abelian or $2$-step nilpotent. Since the Lie algebra $\fr{n}$ of $N$ is $2$-step nilpotent or abelian, the nilmanifold $\Gamma\bs N$ is diffeomorphic to a torus $T^{b_1(M)}$, or a bundle $E\rightarrow T^{b_1(M)}$ over a torus $T^{b_1(M)}$ (\cite{PS61}). Therefore \cref{abel2step} holds.
\end{proof}
Thus it suffices to study a $2$-step nilpotent Lie algebra $\fr{n}$ such that the complexification $\fr{n}_{\bb{C}}$ admits a bigrading $\fr{n}_{\bb{C}}=\fr{n}_{-1,0}\o\fr{n}_{0,-1}\o\fr{n}_{-1,-1}$ such that 
\ali{
C^1\fr{n}_{\bb{C}}=\fr{n}_{-1,-1}
}
and
\ali{
H^1(\fr{n}_{\bb{C}})=H_{1,0}^1(\fr{n}_{\bb{C}})\o H_{0,1}^1(\fr{n}_{\bb{C}})\text{ with }\overline{H_{1,0}^1(\fr{n}_{\bb{C}})}=H_{0,1}^1(\fr{n}_{\bb{C}}).
}
As a consequence, we have the following corollary.
\begin{cor}\label{3stepNG}
Let $N$ be a simply-connected nilpotent Lie group. Assume that the Lie algebra $\fr{n}=\fr{n}_{\bb{Q}}\otimes\bb{R}$ of $N$ is $(3\leq )t$-step nilpotent or, $2$-step and not isomorphic to $\tilde{\fr{n}}\o \bb{R}^n$ for some $2$-step nilpotent Lie algebra $\tilde{\fr{n}}$ and abelian Lie algebra $\bb{R}^n$ such that 
\ali{
\dim H^1(\fr{n})\notin 2\bb{Z}.
}
Then $M=\Gamma\bs N\times \bb{R}^m$ is not a smooth quasi-projective variety for any lattice $\Gamma$ in $N$ and integer $m>0$.
\end{cor}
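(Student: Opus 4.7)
The plan is to argue by contrapositive, leveraging the structural content established in the proof of \cref{abel2step}. Suppose that $M = \Gamma\bs N \times \bb{R}^m$ is diffeomorphic to a smooth quasi-projective variety; I will show that $\fr{n}$ must violate both clauses of the hypothesis.

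The $3$-step-or-more clause is immediate: \cref{abel2step} already forces $\fr{n}$ to be at most $2$-step nilpotent, so this disjunct is handled at once. For the remaining case, assume $\fr{n}$ is $2$-step with no abelian direct factor; I will derive $\dim H^1(\fr{n}) \in 2\bb{Z}$. By \cref{exNomizu}, as exploited in the proof of \cref{abel2step}, $\fr{n}_{\bb{C}}$ admits a bigrading $\fr{n}_{\bb{C}} = \fr{n}_{-1,0} \o \fr{n}_{0,-1} \o \fr{n}_{-1,-1}$ in which $C^1\fr{n}_{\bb{C}} \subseteq \fr{n}_{-1,-1}$ (every bracket of the outer pieces lands in $\fr{n}_{-1,-1}$) and $\overline{\fr{n}_{-1,-1}} = \fr{n}_{-1,-1}$ (the error term from \cref{exNomizu} vanishes at the minimal weight $(-1,-1)$).

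The key step is to upgrade the containment $C^1\fr{n}_{\bb{C}} \subseteq \fr{n}_{-1,-1}$ to equality using the no-abelian-direct-factor hypothesis. Picking a real complement $V_{\bb{R}}$ of $C^1\fr{n}$ inside the real form of $\fr{n}_{-1,-1}$, the bracket constraints $[\fr{n}_{-1,-1}, \fr{n}_{p,q}] \subseteq \fr{n}_{-1+p,-1+q} = \0$ force $V_{\bb{R}}$ to be central in $\fr{n}$. Choosing any vector-space complement $U \supseteq C^1\fr{n}$ of $V_{\bb{R}}$ in $\fr{n}$ yields $[U,U] \subseteq C^1\fr{n} \subseteq U$, so $U$ is a subalgebra and $\fr{n} = U \o V_{\bb{R}}$ as Lie algebras. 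The no-abelian-direct-factor hypothesis then forces $V_{\bb{R}} = \0$, hence $\fr{n}_{-1,-1} = C^1\fr{n}_{\bb{C}}$.

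With this identification, $H^1(\fr{n}_{\bb{C}}) \cong (\fr{n}_{\bb{C}}/C^1\fr{n}_{\bb{C}})^* = \fr{n}_{-1,0}^* \o \fr{n}_{0,-1}^*$ is concentrated in bidegrees $(1,0)$ and $(0,1)$, and the conjugate symmetry from \cref{exNomizu} (whose error term vanishes in weight $1$) yields $\dim H_{1,0}^1(\fr{n}_{\bb{C}}) = \dim H_{0,1}^1(\fr{n}_{\bb{C}})$, whence $\dim H^1(\fr{n}) \in 2\bb{Z}$, contradicting the hypothesis. The hardest part is this direct-factor step: carefully checking that a conjugation-stable central complement of $C^1\fr{n}$ inside $\fr{n}_{-1,-1}$ really does split off as a genuine Lie-algebra direct summand of $\fr{n}$ rather than merely as a vector-space complement.
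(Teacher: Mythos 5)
Your proof is correct and follows the same route the paper takes: the paper derives \cref{3stepNG} directly from \cref{abel2step} together with the remark preceding the corollary that one may reduce to the case $C^1\fr{n}_{\bb{C}}=\fr{n}_{-1,-1}$ with $H^1=H_{1,0}^1\o H_{0,1}^1$ conjugate-symmetric, which forces $\dim H^1(\fr{n})\in 2\bb{Z}$. Your argument simply makes explicit the step the paper leaves implicit, namely that a (conjugation-stable, hence real) central complement of $C^1\fr{n}$ in $\fr{n}_{-1,-1}$ splits off as an abelian Lie-algebra direct factor, which is excluded by hypothesis.
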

\section{The case of $\dim N\leq 8$}
In this section, we consider whether a non-compact nilmanifold is diffeomorphic to a smooth quasi-projective variety. First, by combining the results \cite[Theorem 1.3, Theorem1.4]{Shimoji} and \cref{3stepNG}, we can easily determine the Lie algebras of dimension up to $7$ such that corresponding non-compact nilmanifolds may be diffeomorphic to smooth quasi-projective varieties. Moreover, bigradings which satisfy the conditions in \cref{exNomizu} are given in \cref{eg2},\cref{eg3} and \cref{eg4}.
\begin{cor}\label{upto7}
Let $M=\Gamma \bs N\times \bb{R}^m$ be a non-compact nilmanifold. Assume that  $M$ is diffeomorphic to a smooth quasi-projective variety. If $\dim N\leq 7$, then the complexification $\fr{n}_{\bb{C}}$ of the Lie algebra $\fr{n}$ of $N$ is isomorphic to one of the following:
\ali{
&\bb{C}&\text{ if }b_1(\Gamma\bs N)=1\\
&\bb{C}^2\text{ or }\fr{n}_3(\bb{C})&\text{ if }b_1(\Gamma\bs N)=2\\
&\bb{C}^3\text{ or }\fr{n}_3(\bb{C})\o\bb{C}&\text{ if }b_1(\Gamma\bs N)=3\\
&\bb{C}^4,\fr{n}_3(\bb{C})\o\bb{C}^2,\fr{n}_5(\bb{C}),\fr{n}_3(\bb{C})\o\fr{n}_3(\bb{C}),\fr{n}_7^{142}\text{ or }\fr{n}_7^{143}&\text{ if }b_1(\Gamma\bs N)=4\\
&\bb{C}^5,\fr{n}_3(\bb{C})\o\fr{n}_3(\bb{C})\o\bb{C},\fr{n}_5(\bb{C})\text{ or }\fr{n}_3(\bb{C})\o\bb{C}^3&\text{ if }b_1(\Gamma\bs N)=5\\
&\bb{C}^6,\fr{n}_3(\bb{C})\o\bb{C}^4,\fr{n}_5(\bb{C})\o\bb{C}^2\text{ or }\fr{n}_7(\bb{C})&\text{ if }b_1(\Gamma\bs N)=6\\
&\bb{C}^7&\text{ if }b_1(\Gamma\bs N)=7
}
where $\fr{n}_7^{142}$ and $\fr{n}_7^{143}$ are given by
\ali{
\fr{n}_7^{142}=\ang{X_1,\dots,X_7}:&[X_1,X_i]=X_{i-1}\text{ for }i=3,5,7\\
&[X_3,X_5]=X_4,[X_5,X_7]=X_2,\\
\fr{n}_7^{143}=\ang{X_1,\dots,X_7}:&[X_1,X_i]=X_{i-1}\text{ for }i=3,5,7\\
&[X_3,X_5]=X_6,[X_5,X_7]=X_2.
}
\end{cor}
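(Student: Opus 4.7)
My plan is to combine \cref{abel2step}, \cref{3stepNG}, the classification of low-dimensional $2$-step nilpotent Lie algebras from \cite{Nil}, and the enumeration of torsion-free nilpotent fundamental groups already carried out in \cite[Theorems~1.3, 1.4]{Shimoji}.

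By \cref{abel2step} the Lie algebra $\fr{n}$ must be abelian or $2$-step nilpotent, and $\fr{n}_{\bb{C}}$ must admit a bigrading concentrated in bidegrees $(-1,0)$, $(0,-1)$, $(-1,-1)$ with $C^1\fr{n}_{\bb{C}}\subseteq\fr{n}_{-1,-1}$. This cuts the search space down to the $2$-step (and abelian) nilpotent Lie algebras of dimension $\leq 7$ admitting a rational structure (\cref{lattice}), finitely many isomorphism classes by \cite{Nil}.

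\cref{3stepNG} then acts as a parity sieve on this list. Hodge symmetry $\overline{H_{1,0}^1(\fr{n}_{\bb{C}})}\equiv H_{0,1}^1(\fr{n}_{\bb{C}})$ forces $\dim H_{1,0}^1=\dim H_{0,1}^1$, so $\dim H^1(\fr{n})$ is even whenever $H_{1,1}^1(\fr{n}_{\bb{C}})=0$; and $H_{1,1}^1(\fr{n}_{\bb{C}})\neq 0$ exactly when an abelian factor splits off, since a nonzero class there is a closed element of $\fr{n}_{-1,-1}^*$ and therefore dual to a central generator outside $C^1\fr{n}$. Consequently any indecomposable $2$-step $\fr{n}$ with odd $\dim H^1(\fr{n})$ is excluded, and sorting the survivors by $b_1(\Gamma\bs N)=\dim H^1(\fr{n})$ produces the table in the statement. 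At each value of $b_1$ the resulting list agrees with the classification of fundamental groups in \cite[Theorems~1.3, 1.4]{Shimoji}, which provides an independent check.

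Finally, for each entry in the table I must exhibit a bigrading meeting the conditions of \cref{exNomizu}. Abelian summands are handled by \cref{eg2}, Heisenberg summands $\fr{n}_{2k+1}(\bb{C})$ by \cref{eg3}, direct sums inherit the obvious product bigrading, and the sporadic seven-dimensional algebras $\fr{n}_7^{142}$, $\fr{n}_7^{143}$ by the explicit basis change of \cref{eg4}. The step I expect to be most delicate is the exceptional case $b_1=4$, $\dim N=7$: arguing that among all seven-dimensional $2$-step nilpotent Lie algebras with $b_1=4$ precisely $\fr{n}_7^{142}$ and $\fr{n}_7^{143}$ admit such a bigrading. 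This reduces to matching the notation of \cite{Nil} against the families $(37A)$--$(37D)$ and discarding those whose derived subalgebra fails to be stable under an antiholomorphic involution compatible with Hodge symmetry on $H^1$; the matching has already been done in \cite{Shimoji}, which I intend to quote rather than redo.
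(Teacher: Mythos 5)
Your proposal is correct and follows essentially the same route as the paper, which obtains the list by combining \cite[Theorems 1.3, 1.4]{Shimoji} with the sieve of \cref{3stepNG} (three-step exclusion plus the parity/abelian-factor condition on $\dim H^1$) and then exhibits the required bigradings via \cref{eg2}, \cref{eg3} and \cref{eg4}. The only difference is one of emphasis: you derive the two-step restriction directly from \cref{abel2step} and treat \cite{Shimoji} as a cross-check, while the paper simply cites \cite{Shimoji} for the candidate list; both amount to the same argument.
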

By using the classification of $2$-step nilpotent Lie algebras of dimension $8$ in \cite{YanDeng} and \cref{3stepNG}, we can determine the nilpotent Lie groups such that  constructed non-compact nilmanifolds may be diffeomorphic to smooth quasi-projective varieties.  
\begin{thm}\label{dim8}
Let $M=\Gamma \bs N\times \bb{R}^m$ be a non-compact nilmanifold. Assume that  $M$ is diffeomorphic to a smooth quasi-projective variety and the Lie algebra $\fr{n}$ of $N$ is indecomposable. If $\dim N=8$, then the complexification of the Lie algebra is isomorphic to one of the following:
\ali{
&N_1^{8,4}&\text{ if }b_1(\Gamma\bs N)=4,\\
&N_i^{8,2} (1\leq i\leq 5)&\text{ if }b_1(\Gamma\bs N)=6,\\
&\bb{C}^8&\text{ if }b_1(\Gamma\bs N)=8
}
where the Lie algebras $N_1^{8,4},N_i^{8,2}$ for $1\leq i\leq 5$ are given by
\ali{
N_1^{8,4}&=\ang{X_1,X_2,\overline{X_1},\overline{X_2},Z_1,Z_2,Z_3,Z_4}:[X_1,\overline{X_1}]=Z_1,[X_1,\overline{X_2}]=Z_2,[X_2,\overline{X_1}]=Z_3,[X_2,\overline{X_2}]=Z_4,\\
N_1^{8,2}&=\ang{x_1,x_2,x_3,x_4,x_5,x_6,x_7,x_8}:[x_1,x_2]=x_7,[x_3,x_4]=x_8,[x_5,x_6]=x_7+x_8,\\
N_2^{8,2}&=\ang{x_1,x_2,x_3,x_4,x_5,x_6,x_7,x_8}:[x_1,x_2]=x_7=[x_4,x_5],[x_1,x_3]=x_8=[x_4,x_6],\\
N_3^{8,2}&=\ang{x_1,x_2,x_3,x_4,x_5,x_6,x_7,x_8}:[x_1,x_2]=x_7=[x_4,x_5],[x_3,x_4]=x_8=[x_5,x_6],\\
N_4^{8,2}&=\ang{x_1,x_2,x_3,x_4,x_5,x_6,x_7,x_8}:[x_1,x_2]=x_7=[x_3,x_4]=[x_5,x_6],[x_4,x_5]=x_8,\\
N_5^{8,2}&=\ang{x_1,x_2,x_3,x_4,x_5,x_6,x_7,x_8}:[x_1,x_2]=x_7=[x_3,x_4]=[x_5,x_6],[x_4,x_5]=x_8=[x_2,x_3].
}
\end{thm}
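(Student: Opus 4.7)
The plan is to reduce to the indecomposable $2$-step case using the previous results, and then to test the classification of $8$-dimensional indecomposable $2$-step nilpotent Lie algebras of \cite{YanDeng} against the bigrading criterion distilled after Theorem~\ref{abel2step}. By Theorem~\ref{abel2step}, $\fr{n}$ is abelian or $2$-step; the abelian case produces only $\bb{C}^8$. In the $2$-step case, indecomposability together with Corollary~\ref{3stepNG} forces $b_1(\Gamma\bs N)=\dim\fr{n}-\dim[\fr{n},\fr{n}]$ to be even, so $b_1\in\{2,4,6\}$. A dimension count rules out $b_1=2$: in any bigrading $\fr{n}_{\bb{C}}=\fr{n}_{-1,0}\o\fr{n}_{0,-1}\o\fr{n}_{-1,-1}$ we have $\dim\fr{n}_{-1,0}=\dim\fr{n}_{0,-1}=b_1/2$ and $\dim\fr{n}_{-1,-1}=8-b_1$, so $b_1=2$ would force the bracket $\fr{n}_{-1,0}\otimes\fr{n}_{0,-1}\rightarrow\fr{n}_{-1,-1}$ to have image of dimension at most one, contradicting $C^1\fr{n}_{\bb{C}}=\fr{n}_{-1,-1}$ of dimension six.

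For the remaining cases $b_1=4$ and $b_1=6$, I would first construct the required bigrading explicitly for each algebra $N_1^{8,4}$ and $N_i^{8,2}$ ($1\leq i\leq 5$) listed in the statement, following the templates of \cref{eg3} and \cref{eg4}. For $N_1^{8,4}$ the given basis already splits as $\fr{n}_{-1,0}:=\ang{X_1,X_2}$, $\fr{n}_{0,-1}:=\ang{\overline{X_1},\overline{X_2}}$, $\fr{n}_{-1,-1}:=\ang{Z_1,Z_2,Z_3,Z_4}$, and the listed brackets respect the bidegrees. For each $N_i^{8,2}$, I would take suitable complex linear combinations $e_1,e_2,e_3$ of $x_1,\dots,x_6$ so that $\fr{n}_{-1,0}:=\ang{e_1,e_2,e_3}$ is $\bb{C}$-abelian, set $\fr{n}_{0,-1}:=\overline{\fr{n}_{-1,0}}$ and $\fr{n}_{-1,-1}:=\ang{x_7,x_8}_{\bb{C}}$, and verify from the explicit relations in the statement that $[\fr{n}_{-1,0},\fr{n}_{0,-1}]\subset\fr{n}_{-1,-1}$ and that $C^1\fr{n}_{\bb{C}}=\fr{n}_{-1,-1}$; these verifications reduce to short computations from the bracket relations.

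Conversely, I would rule out every indecomposable $2$-step nilpotent $8$-dimensional algebra with $b_1\in\{4,6\}$ not appearing in the statement. The key observation is that existence of the bigrading is equivalent to existence of a complex structure $J$ on the real vector space $\fr{n}/[\fr{n},\fr{n}]$ such that the skew-symmetric $[\fr{n},\fr{n}]$-valued bracket form on $\fr{n}/[\fr{n},\fr{n}]$ is of type $(1,1)$ with respect to $J$ (so that $\fr{n}_{-1,0}$ is the $(+i)$-eigenspace and $\fr{n}_{0,-1}$ its complex conjugate); this is a basis-free invariant that can be tested against each normal form in \cite{YanDeng} without ad hoc case-by-case basis changes. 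The main obstacle is executing this test: the number of indecomposable $2$-step algebras with $b_1\in\{4,6\}$ in the classification is not small, and for each one outside the stated list I must certify non-existence of such a $J$ by, for example, exhibiting a distinguished real subspace or rank condition on the bracket form that is incompatible with any complex structure of type $(1,1)$. I expect this to be the most delicate step of the proof.
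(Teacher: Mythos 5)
Your skeleton (reduce to the $2$-step case via \cref{abel2step}, use \cref{3stepNG} to force $b_1$ even, then compare against the classification in \cite{YanDeng}) matches the paper's route, and your dimension count ruling out $b_1=2$ is a valid substitute for the paper's citation of \cite[Theorem 1.1]{Shimoji} (which gives $\dim H^1(\fr{n}_{\bb{C}})\geq 4$ directly). But the decisive step --- deciding, for $b_1\in\{4,6\}$, which algebras in the classification actually survive --- is not executed: you describe a strategy (test each normal form against the existence of a complex structure $J$ on $\fr{n}/[\fr{n},\fr{n}]$ making the bracket of type $(1,1)$) and explicitly defer it as ``the most delicate step.'' As written, the proposal therefore does not prove the theorem; the list of surviving algebras, which is the entire content of the statement, is never derived.

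What you missed is that the case analysis you are worried about essentially evaporates, by the very inequality you already used for $b_1=2$. For an indecomposable $2$-step algebra the bigrading forces $C^1\fr{n}_{\bb{C}}=\fr{n}_{-1,-1}$ and $[\fr{n}_{-1,0},\fr{n}_{-1,0}]=[\fr{n}_{0,-1},\fr{n}_{0,-1}]=\0$, so the bracket induces a surjection $\fr{n}_{-1,0}\otimes\fr{n}_{0,-1}\rightarrow C^1\fr{n}_{\bb{C}}$ and hence $(b_1/2)^2\geq 8-b_1$. For $b_1=4$ this is an equality, so the pairing $\bb{C}^2\otimes\bb{C}^2\rightarrow\bb{C}^4$ is a linear isomorphism, which characterizes $N_1^{8,4}$ uniquely up to isomorphism --- no enumeration of the other $N_i^{8,4}$ in \cite{YanDeng} is needed. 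For $b_1=6$ there is nothing to exclude at all: the five algebras $N_i^{8,2}$ exhaust the indecomposable $2$-step complex Lie algebras of dimension $8$ with $2$-dimensional derived subalgebra, and the theorem lists all of them (the explicit bigradings in the paper's subsequent remark only serve the converse direction, showing these candidates are not excluded by \cref{exNomizu}). So the gap is closable, but your proposal both leaves it open and mislocates the difficulty.
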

\begin{proof}
By \cite[Theorem1.1]{Shimoji}, we have 
\ali{
\dim H^1(\fr{n}_{\bb{C}})\geq 4.
}
We first assume $\dim H^1(\fr{n}_{\bb{C}})= 4$. Then we see that $\fr{n}_{\bb{C}}$ is isomorphic to the following Lie algebra $N_1^{8,4}$ in \cite{YanDeng}:
\ali{
N_1^{8,4}&=\ang{X_1,X_2,\overline{X_1},\overline{X_2},Z_1,Z_2,Z_3,Z_4}:[X_1,\overline{X_1}]=Z_1,[X_1,\overline{X_2}]=Z_2,[X_2,\overline{X_1}]=Z_3,[X_2,\overline{X_2}]=Z_4.
}
Since \cref{3stepNG}, we next assume $\dim H^1(\fr{n}_{\bb{C}})= 6$. By \cite{YanDeng}, then $\fr{n}_{\bb{C}}$ is isomorphic to one of the following Lie algebras:
\ali{
N_1^{8,2}&=\ang{x_1,x_2,x_3,x_4,x_5,x_6,x_7,x_8}:[x_1,x_2]=x_7,[x_3,x_4]=x_8,[x_5,x_6]=x_7+x_8,\\
N_2^{8,2}&=\ang{x_1,x_2,x_3,x_4,x_5,x_6,x_7,x_8}:[x_1,x_2]=x_7=[x_4,x_5],[x_1,x_3]=x_8=[x_4,x_6],\\
N_3^{8,2}&=\ang{x_1,x_2,x_3,x_4,x_5,x_6,x_7,x_8}:[x_1,x_2]=x_7=[x_4,x_5],[x_3,x_4]=x_8=[x_5,x_6],\\
N_4^{8,2}&=\ang{x_1,x_2,x_3,x_4,x_5,x_6,x_7,x_8}:[x_1,x_2]=x_7=[x_3,x_4]=[x_5,x_6],[x_4,x_5]=x_8,\\
N_5^{8,2}&=\ang{x_1,x_2,x_3,x_4,x_5,x_6,x_7,x_8}:[x_1,x_2]=x_7=[x_3,x_4]=[x_5,x_6],[x_4,x_5]=x_8=[x_2,x_3].
}
\end{proof}
\begin{rem}
We can show the existence of bigradings on the above Lie algebras. For example, we define the bigrading on $N_1^{8,4}$ by
\ali{
N_1^{8,4}=\ang{X_1,X_2}_{-1,0}\o\ang{\overline{X_1},\overline{X_2}}_{0,-1}\o\ang{Z_1,Z_2,Z_3,Z_4}_{-1,-1}.
}
Then the induced bigrading on the cohomology $H^*(N_i^{8,2})=\bo_{p,q}H_{p,q}^*(N_i^{8,2})$ satisfy the condition in \cref{exNomizu}. Moreover, we define the bigrading on $N_i^{8,2}$ by 
\ali{
N_i^{8,2}=\ang{A_1,A_2,A_3}_{-1,0}\o\ang{\overline{A_1},\overline{A_2},\overline{A_3}}_{0,-1}\o\ang{B_1,B_2}_{-1,-1}
}
where the basis transformations are given by
\ali{
&A_1:=x_1+ix_2,A_2:=x_3+ix_4,A_3:=x_5+ix_6,B_1:=-2ix_7,B_2:=-2ix_8&\text{ if }\fr{n}_{\bb{C}}\cong N_1^{8,2},\\
&A_1:=x_1+ix_4,A_2:=x_3+ix_6,A_3:=-x_5+ix_2,B_1:=-2ix_7,B_2:=-2ix_8&\text{ if }\fr{n}_{\bb{C}}\cong N_2^{8,2},\\
&A_1:=x_1+ix_2,A_2:=x_3+ix_6,A_3:=x_5+ix_4,B_1:=-2ix_7,B_2:=-2ix_8&\text{ if }\fr{n}_{\bb{C}}\cong N_3^{8,2}\text{ or }N_4^{8,2},
}
\ali{
A_1:=x_1+ix_6,A_2:=x_3+i(x_4+x_6),A_3:=x_5+i(x_2+x_4+x_6),B_1:=-2ix_7,B_2:=-2ix_8\text{ if }\fr{n}_{\bb{C}}\cong N_5^{8,2}
}
Then the induced bigrading on the cohomology $H^*(N_i^{8,2})=\bo_{p,q}H_{p,q}^*(N_i^{8,2})$ satisfy the condition in \cref{exNomizu}.
\end{rem}
\section{Comparing with our previous work}\label{Prework}
In \cite{Shimoji}, we considered the following problem.
\begin{prob}\label{Campana}
Let $X$ be a smooth quasi-projective variety over $\bb{C}$. Assume that the (topological) fundamental group $\pi_1(X,x)$ is torsion-free nilpotent. Is $\pi_1(X,x)$ abelian or $2$-step nilpotent?
\end{prob}
By using the Morgan's result for Lie algebras in \cite[Theorem (9.4)]{Morgan}, we showed the following result.
\begin{thm}[\cite{Shimoji}]
Let $X$ be a smooth quasi-projective variety. Assume that $\pi_1(X,x)$ is torsion-free nilpotent. If $rk(\pi_1(X,x))\leq 7$, then $\pi_1(X,x)$ is abelian or $2$-step nilpotent.
\end{thm}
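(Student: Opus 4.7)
The plan is to apply Morgan's Theorem (9.4) in \cite{Morgan} directly to the real Malcev Lie algebra $\fr{n}$ of the torsion-free nilpotent group $\pi_1(X,x)$, whose real dimension equals the rank $\mathrm{rk}(\pi_1(X,x)) \leq 7$. That theorem equips $\fr{n}_{\bb{C}}$ with a bigrading $\fr{n}_{\bb{C}} = \bo_{p,q \leq 0,\, p+q \leq -1} \fr{n}_{p,q}$ of a mixed Hodge structure, compatible with the Lie bracket in the sense $[\fr{n}_{p,q},\fr{n}_{r,s}] \subset \fr{n}_{p+r,q+s}$ and with the conjugation symmetry $\overline{\fr{n}_{p,q}} \equiv \fr{n}_{q,p}$ modulo strictly lower weights---the same conditions used in \cref{exNomizu}, but attached now to $\pi_1$ rather than to a nilmanifold structure. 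The theorem follows once one rules out, in real dimension $\leq 7$, the existence of any such bracket-compatible bigrading on a $t$-step nilpotent Lie algebra with $t \geq 3$.

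First I would record that, for any $t$-step $\fr{n}_{\bb{C}}$, weight counting on the descending central series forces a nonzero graded piece $\fr{n}_{p,q}$ with $p+q \leq -t$, while the generators (lifts of a basis of $\fr{n}_{\bb{C}}/[\fr{n}_{\bb{C}},\fr{n}_{\bb{C}}]$) must all lie in the weight $-1$ part $\fr{n}_{-1,0} \o \fr{n}_{0,-1} \o \fr{n}_{-1,-1}$. I would then consult the classification of real nilpotent Lie algebras of dimension $\leq 7$ in \cite{Nil} and enumerate the finitely many indecomposable $t$-step algebras with $t \geq 3$. For each such algebra, I would attempt to assign weight types to a basis compatibly with the structure constants, the positivity $p,q \leq 0$, and the required conjugation symmetry on the associated graded level; the goal is to show in every case that no such assignment exists. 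Decomposable algebras reduce to the indecomposable setting because a mixed Hodge structure respects the Malcev decomposition into indecomposable factors, so any $3$-step-or-higher decomposable example must contain a $3$-step-or-higher indecomposable summand of rank $\leq 7$, already eliminated.

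The main obstacle is precisely this exhaustive case analysis in dimensions $6$ and $7$, since no single numerical invariant (such as parity of $\dim H^1(\fr{n})$, which handles the easier bulk of the eliminations via an analogue of \cref{3stepNG}) disposes of every candidate. For the remaining filiform, quasi-filiform and characteristically nilpotent algebras one must perform a small but nontrivial linear-algebra verification: list the candidate bigradings consistent with bracket compatibility and positivity, then show that the conjugation symmetry modulo lower weights admits no solution---typically by examining the induced bigrading on $[\fr{n},\fr{n}]$ and checking that the Hodge-symmetric pairing forced on it conflicts with the bracket relations. Once every $3$-step-or-higher indecomposable algebra of dimension $\leq 7$ has been disposed of, only abelian and $2$-step algebras remain, which translates directly to $\pi_1(X,x)$ being abelian or $2$-step nilpotent.
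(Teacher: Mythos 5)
The paper itself does not prove this statement: it is quoted from \cite{Shimoji}, and the method is described only as an application of Morgan's Theorem (9.4) to the Malcev Lie algebra of $\pi_1(X,x)$ followed by a check against the classification of nilpotent Lie algebras of dimension at most $7$ in \cite{Nil}. Your plan is precisely that strategy, so in outline it coincides with the cited argument. (One small slip: $\fr{n}_{-1,-1}$ has weight $p+q=-2$, not $-1$, although the displayed decomposition $\fr{n}_{-1,0}\o\fr{n}_{0,-1}\o\fr{n}_{-1,-1}$ for the generators is the correct one, matching $I_1$ in \cref{exNomizu}.)

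Two places where your write-up is thinner than a proof. First, essentially all of the content of the theorem is the case-by-case elimination of the $3$-step-and-higher algebras in dimensions $6$ and $7$, which you describe but do not carry out; as the $8$-dimensional example $\fr{g}$ in \cref{Prework} shows, a $3$-step algebra \emph{can} carry a Morgan bigrading, so there is no soft weight-counting argument (of the kind used in \cref{abel2step}, which is only available in the nilmanifold setting via \cref{exNomizu}) that replaces the verification --- it must actually be done. Second, your reduction of decomposable algebras to indecomposable ones rests on the assertion that a bracket-compatible mixed Hodge bigrading on $\fr{a}\o\fr{b}$ restricts to one on each indecomposable factor; the graded pieces $\fr{n}_{p,q}$ need not be direct sums of subspaces of the two factors, so this step needs an argument (or one must simply include the decomposable $3$-step algebras of dimension $\leq 7$ in the case list).
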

Moreover, we determined the Lie groups of dimension up to $7$ whose lattices may be isomorphic to the fundamental groups of smooth quasi-projective varieties (see \cite[Theorem 1.3 and 1.4]{Shimoji}). However, from rank $8$, we cannot determine the sat classes of the fundamental groups of smooth quasi-projective varieties. In fact, there exists a finitely generated torsion-free $3$-step nilpotent group such that it satisfies the Morgan's result which is used in \cite{Shimoji}. The example is a lattice in the following Lie group 
\ali{
N(\bb{R}):=\left \{\begin{pmatrix}
1&0&y_2&y_1&-z_2&-z_1&0&\frac{1}{2}(y_1+y_2)&3b\\
0&1&x_2&x_1&0&\frac{1}{2}(x_1+x_2)&-z_2&-z_1&3a\\
0&0&1&0&x_1&x_2&-y_1&-y_2&2z_2\\
0&0&0&1&x_2&x_1&-y_2&-y_1&2z_1\\
0&0&0&0&1&0&0&0&y_2\\
0&0&0&0&0&1&0&0&y_1\\
0&0&0&0&0&0&1&0&x_2\\
0&0&0&0&0&0&0&1&x_1\\
0&0&0&0&0&0&0&0&1\\
\end{pmatrix}
;
x_1,x_2,y_1,y_2,z_1,z_2,a,b\in\bb{R}\right \}
.
}
Moreover, the Lie algebra of $N(\bb{R})$ is given by
\ali{
\fr{g}:=\ang{X_1,X_2,Y_1,Y_2,Z_1,Z_2,A,B}:&[X_1,Y_1]=Z_1=[X_2,Y_2], [X_2,Y_1]=Z_2=[X_1,Y_2],\\
&[X_1,Z_1]=A=[X_2,Z_2],[Y_1,Z_1]=B=[Y_2,Z_2]
}
and the bigrading on $\fr{g}_{\bb{C}}$ of a mixed Hodge structure is given by
\ali{
\fr{g}_{\bb{C}}&=\ang{A_1,A_2}_{-1,0}\o\ang{\overline{A_1},\overline{A_2}}_{0,-1}\o\ang{B_1,B_2}_{-1,-1}\o\ang{C_1}_{-2,-1}\o\ang{\overline{C_1}}_{-1,-2}
}
where $A_1:=X_1+iY_1,A_2:=X_2+iY_2,B_1:=-2iZ_1,B_2:=-2iZ_2$ and $C_1:=-2i(A+iB)$. This bigrading satisfies conditions which are shown by Morgan \cite[Theorem (9.4)]{Morgan}. In this paper, we focus on the class of the intersection between smooth quasi-projective varieties and non-compact nilmanifolds. Then we can show the restriction of the nilpotency classes of the fundamental groups in arbitrary dimension. Thus, by \cref{abel2step}, we obtain a following corollary.
\begin{cor}
Let $N$ be a simply-connected nilpotent Lie group and $\Gamma$ a lattice in $N$. Assume that the Lie algebra of $N$ is isomorphic to the above Lie algebra $\fr{g}$. If there exists a smooth quasi-projective variety $M$ such that the fundamental group $\pi_1(M,x)$ is isomorphic to $\Gamma$, then $M$ is not diffeomorphic to any non-compact nilmanifold.
\end{cor}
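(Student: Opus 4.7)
The plan is a direct contradiction argument built on \cref{abel2step}. Suppose, for contradiction, that $M$ is diffeomorphic to some non-compact nilmanifold $\Gamma'\bs N'\times \bb{R}^m$, where $N'$ is a simply-connected nilpotent Lie group, $\Gamma'$ is a lattice in $N'$, and $m>0$. Because $M$ is assumed to be a smooth quasi-projective variety, \cref{abel2step} applies directly and forces $\pi_1(M,x)$ to be torsion-free nilpotent of nilpotency class at most $2$.

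On the other hand, the hypothesis of the corollary gives $\pi_1(M,x)\cong \Gamma$, where $\Gamma$ is a lattice in the simply-connected nilpotent Lie group $N$ whose Lie algebra is the $\fr{g}$ displayed in the preceding discussion. I would verify from the explicit bracket relations of $\fr{g}$ that
\ali{
[X_1,[X_1,Y_1]]=[X_1,Z_1]=A\neq 0,\quad [Y_1,[X_1,Y_1]]=[Y_1,Z_1]=B\neq 0,
}
and that no higher iterated brackets are nonzero, so $C^3\fr{g}=\ang{A,B}\neq \0$ while $C^4\fr{g}=\0$. Hence $\fr{g}$ is genuinely $3$-step nilpotent, and by Mal'cev's theorem (a simply-connected nilpotent Lie group and any of its lattices share the same nilpotency class), $\Gamma$ is a torsion-free nilpotent group of class exactly $3$.

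Combining the two paragraphs produces a group that is simultaneously of nilpotency class at most $2$ and of class $3$, a contradiction. Therefore no non-compact nilmanifold can be diffeomorphic to such an $M$. The only point of care — which is not really an obstacle — is confirming that $\fr{g}$ is truly $3$-step rather than accidentally lower, and that the nilpotency class of $\Gamma$ is transported from $N$ via Mal'cev; both are immediate from the data already listed in the excerpt, and no further geometric input beyond \cref{abel2step} is required.
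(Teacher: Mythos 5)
Your proof is correct and matches the paper's intended argument: the paper derives this corollary in one line from \cref{abel2step}, and your expansion (apply \cref{abel2step} to bound the nilpotency class of $\pi_1(M,x)$ by two, check from the brackets that $\fr{g}$ is genuinely $3$-step, and transport the class to $\Gamma$ via Mal'cev) is exactly the reasoning being invoked. No gaps.
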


\end{document}